\documentclass[11pt]{amsart}

\usepackage{graphicx}
\usepackage{epsfig}
\usepackage{amsmath}
\usepackage{autobreak}
\usepackage{amssymb}
\usepackage{bigints}
\usepackage[]{breqn}
\usepackage{tikz}
\usepackage{graphicx}
\usepackage{epsfig}
\usepackage[noadjust]{cite}
\usepackage{xcolor}
\usepackage{tikz}
\usetikzlibrary{matrix}
\usepackage[colorlinks,citecolor=red,urlcolor=blue,bookmarks=false,hypertexnames=true]{hyperref}

\topmargin 0mm \evensidemargin 15mm \oddsidemargin 15mm \textwidth
140mm \textheight 230mm

\theoremstyle{plain}
\newtheorem{theorem}      {Theorem}      [section]
\newtheorem*{theorem*}    {Theorem \eqref{thm:appl}}
\newtheorem{proposition}  [theorem]  {Proposition}
\newtheorem{corollary}    [theorem]  {Corollary}

\theoremstyle{definition}

\newtheorem{remark}       [theorem]  {Remark}

\def \i{\mbox{${\textnormal{i}}$}}
\def \d{\mbox{${\textnormal{d}}$}}
\def \dz{\mbox{${\stackrel{o}{\textnormal{d}}}$}}
\def \Dz{\mbox{${\stackrel{o}{\Delta}}$}}

\def \r{\mbox{${\mathbb R}$}}
\def \s{\mbox{${\mathbb S}$}}

\numberwithin{equation}{section}

\begin{document}

\title[The energy density of biharmonic quadratic maps between spheres]
{The energy density of biharmonic quadratic maps between spheres}

\author{Rare\c s Ambrosie and Cezar Oniciuc}

\address{Faculty of Mathematics\\ Al. I. Cuza University of Iasi\\
Blvd. Carol I, 11 \\ 700506 Iasi, Romania} \email{rares_ambrosie@yahoo.com}

\address{Faculty of Mathematics\\ Al. I. Cuza University of Iasi\\
Blvd. Carol I, 11 \\ 700506 Iasi, Romania} \email{oniciucc@uaic.ro}

\subjclass[2010]{53C43, 35G20, 15A63}

\keywords{Biharmonic maps, spherical maps, homogeneous polynomial maps}

\begin{abstract}
In this paper, we first prove that a quadratic form from $\s^m$ to $\s^n$ is non-harmonic biharmonic if and only if it has constant energy density $(m+1)/2$. Then, we give a positive answer to an open problem raised in \cite{AOO23} concerning the structure of non-harmonic biharmonic quadratic forms. As a direct application, using classification results for harmonic quadratic forms, we infer classification results for non-harmonic biharmonic quadratic forms.
\end{abstract}

\maketitle

\section{Introduction}

Biharmonic maps between Riemannian manifolds represent a natural generalization of the well known harmonic maps. Even if the most examples and classification results for biharmonic maps have been obtained in the submanifolds theory (see, for example, \cite{FO22} and \cite{OC20}), there are also other geometric contexts where the biharmonic maps have interesting applications (see, for example, \cite{BK03}, \cite{BFO10}, \cite{BFO17}, \cite{BO18}, \cite{MOR15}, \cite{O03}, \cite{O22} and \cite{WOY14}). Recently, biharmonic homogeneous polynomial maps between spheres have been studied and, in particular, the quadratic forms proved to be a good environment for the biharmonic equation (see \cite{AOO23}).

It is well known that there are quadratic forms with non-constant energy density, but a quadratic form is harmonic if and only if its energy density is a certain constant (see Proposition \eqref{prop3}). Naturally, one could ask if the same type of property holds for biharmonic quadratic forms. In this paper we give a positive answer to this question and prove that a quadratic form between unit Euclidean spheres is non-harmonic biharmonic if and only if its energy density is equal to $(m+1)/2$, where $m$ is the dimension of the domain sphere (see Theorem \eqref{TH3}). Moreover, we prove that any non-harmonic biharmonic quadratic form is obtained from a quadratic form which lies, as a harmonic map, in a certain small hypersphere of the target sphere, confirming the supposition stated as an Open Problem in \cite{AOO23} (see Theorem \eqref{TH4}). This rigidity result allows us to derive classification results for non-harmonic biharmonic quadratic forms from known results for harmonic quadratic forms (see Theorems \eqref{TH5} and \eqref{TH6}).

\section{Preliminary}

As suggested by J. Eells and J.H. Sampson in  ~\cite{ES64,ES65}, or J. Eells and L. Lemaire in \cite{EL83}, biharmonic maps $\varphi:\left(M^m, g\right)\to \left(N^n, h\right)$ between two Riemannian manifolds are critical points of the bienergy functional
$$
E_2:C^\infty(M,N)\to \mathbb{R}, \qquad E_2(\varphi) = \frac{1}{2}\int_{M}\left|\tau(\varphi)\right|^2 \ v_g,
$$
where $M$ is compact and $\tau(\varphi) = \textnormal{trace}\nabla d\varphi$ is the tension field associated to the smooth map $\varphi$. In 1986, G.Y. Jiang proved in  \cite{J86,J86-2} that the biharmonic maps are characterized by the vanishing of their bitension field, that is
$$
0 = \tau_2(\varphi) = -\Delta\tau(\varphi) - \textnormal{trace} R^N\left(d\varphi(\cdot),\tau(\varphi)\right)\d\varphi(\cdot).
$$
The equation $\tau_2(\varphi) = 0$ is called the biharmonic equation and it is a forth order semilinear elliptic equation.

In this paper, the following sign conventions for the rough Laplacian, that acts on the set $C\left(\varphi^{-1}TN\right)$ of all sections of the pull-back bundle $\varphi^{-1}TN$, and for the curvature tensor field are used
$$
\Delta \sigma = -\textnormal{trace}\nabla^2\sigma, \quad R(X,Y)Z = \nabla_X\nabla_Y Z - \nabla_Y\nabla_YZ -\nabla_{[X,Y]}Z.
$$
By $\s^m(r)$ we indicate the $m$-dimensional Euclidean sphere of radius $r$ and, when $r = 1$, we write $\s^m$ instead of $\s^m(1)$.

Since any harmonic map is automatically biharmonic, we study the biharmonic maps which are not harmonic. Such maps are called proper biharmonic.

Many examples of proper biharmonic maps were obtained when the target manifold is the Euclidean sphere and the maps take values in a small hypersphere of it. Taking into account the expression of the second fundamental form of a small hypersphere of the target sphere, we have

\begin{theorem}\label{TH1}
Let $\psi:M\to\s^{n-1}(r)$ be a non constant map, and consider $\varphi = \i\circ\psi: M\to\s^{n}(R)$, where $0<r<R$ and $\i:\s^{n-1}(r)\to\s^n(R)$ is the inclusion map. Then, the bitension field of $\varphi$ is given by
\begin{align}\label{ecuatia1}
\tau_2(\varphi) =& \tau_2(\psi) + 2\left(\frac{1}{R^2}-\frac{1}{r^2}\right)\d\psi\left(\textnormal{grad}\left|\d\psi\right|^2\right) + 4\left(\frac{1}{R^2} -\frac{1}{r^2}\right)e(\psi)\tau(\psi) \\
                 & + \frac{\sqrt{R^2-r^2}}{Rr}\left(2\Delta\left(e(\psi)\right) - 2\textnormal{div}\theta^\sharp + \left|\tau(\psi)\right|^2 - 4\left(\frac{2}{R^2} - \frac{1}{r^2}\right)\left(e(\psi)\right)^2\right)\eta,\nonumber
\end{align}
where $\theta(X) := \langle\d\psi(X), \tau(\psi)\rangle$.
\end{theorem}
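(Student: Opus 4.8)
The plan is to treat $\s^{n-1}(r)$ as a totally umbilic small hypersphere of $\s^{n}(R)$ and to push the standard composition formulae for the tension and bitension fields through the Gauss and Weingarten equations of the inclusion. Realising $\s^{n-1}(r)=\s^{n}(R)\cap\{x_{n+1}=\sqrt{R^2-r^2}\}$, I would first record the two pieces of extrinsic data that drive the whole computation: a unit normal $\eta$ of $\s^{n-1}(r)$ in $\s^{n}(R)$, the umbilic second fundamental form $B(X,Y)=-\frac{\sqrt{R^2-r^2}}{Rr}\langle X,Y\rangle\,\eta$, and the companion Weingarten relation $\onabla_X\eta=\frac{\sqrt{R^2-r^2}}{Rr}\,X$ for $X$ tangent to $\s^{n-1}(r)$. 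Writing $\mu=\frac{\sqrt{R^2-r^2}}{Rr}$, the single algebraic fact that makes everything collapse at the end is $\mu^{2}=\frac1{r^2}-\frac1{R^2}$. Since $\i$ is an isometric immersion, $\d\i$ preserves inner products, so $e(\varphi)=e(\psi)$ and I may identify $\tau(\psi)$ and each $\d\psi(e_i)$ with their images in $\varphi^{-1}T\s^{n}(R)$.

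The first step is the tension field. Applying $\tau(\varphi)=\d\i(\tau(\psi))+\trace_g\nabla\d\i(\d\psi,\d\psi)$ and using that $\nabla\d\i$ is exactly $B$, umbilicity gives $\trace_g B(\d\psi,\d\psi)=-2\mu\,e(\psi)\,\eta$, hence $\tau(\varphi)=\tau(\psi)-2\mu\,e(\psi)\,\eta$. I would keep this split into its tangential part $\tau(\psi)$ and its normal part $-2\mu e(\psi)\eta$ throughout, since the bitension field is to be assembled component by component.

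The core of the proof is $\tau_2(\varphi)=-\Delta^{\varphi}\tau(\varphi)-\trace R^{\s^n(R)}(\d\varphi(\cdot),\tau(\varphi))\d\varphi(\cdot)$. Working in a local orthonormal frame $\{e_i\}$ geodesic at the point, I would compute $\Delta^{\varphi}\tau(\varphi)=-\sum_i\onabla^{\varphi}_{e_i}\onabla^{\varphi}_{e_i}\tau(\varphi)$ by iterating the pullback connection, each time resolving a section of $\varphi^{-1}T\s^n(R)$ into its $\s^{n-1}(r)$-tangential and $\eta$-normal parts. The two structural rules are the Gauss relation $\onabla^{\varphi}_{e_i}V=\nabla^{\psi}_{e_i}V+B(\d\psi(e_i),V)=\nabla^{\psi}_{e_i}V-\mu\,\theta(e_i)\,\eta$ for $V$ tangential, and the Weingarten relation $\onabla^{\varphi}_{e_i}\eta=\mu\,\d\psi(e_i)$, so each derivative trades a tangential term for a normal one and conversely. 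For the curvature term I would substitute the constant sectional curvature expression $R^{\s^n(R)}(X,Y)Z=\frac1{R^2}(\langle Y,Z\rangle X-\langle X,Z\rangle Y)$, which produces $\frac1{R^2}\d\psi(\theta^\sharp)$ together with multiples of $e(\psi)\tau(\psi)$ and of $\mu e(\psi)^2\eta$, where $\theta^\sharp$ is dual to $\theta$.

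To finish I would recognise the intrinsic bitension field of $\psi$ inside the tangential terms via $\tau_2(\psi)=-\Delta^{\psi}\tau(\psi)-\trace R^{\s^{n-1}(r)}(\d\psi(\cdot),\tau(\psi))\d\psi(\cdot)$, the curvature now carrying $\frac1{r^2}$. Substituting $-\Delta^{\psi}\tau(\psi)$ and invoking $\mu^2=\frac1{r^2}-\frac1{R^2}$ makes all the $\d\psi(\theta^\sharp)$ contributions cancel and leaves exactly $2(\frac1{R^2}-\frac1{r^2})\d\psi(\grad|\d\psi|^2)+4(\frac1{R^2}-\frac1{r^2})e(\psi)\tau(\psi)$ on the tangential side. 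On the normal side the needed first-order identities are $\sum_i e_i(\theta(e_i))=\di\theta^\sharp$ and $\sum_i\langle\d\psi(e_i),\nabla^{\psi}_{e_i}\tau(\psi)\rangle=\di\theta^\sharp-|\tau(\psi)|^2$, together with $\Delta(2\mu e(\psi))=2\mu\Delta(e(\psi))$; these collect into the stated coefficient of $\eta$. The main obstacle I anticipate is purely bookkeeping: carrying the tangential/normal decomposition correctly through two applications of $\onabla^{\varphi}$ while keeping the orientation of $\eta$, and hence the signs in $B$ and in Weingarten, consistent, since it is precisely this sign that fixes the overall sign of the $\eta$-component, and a single slip there flips the entire normal part.
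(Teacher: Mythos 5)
Your outline is exactly the computation the paper has in mind: the paper states Theorem \eqref{TH1} without proof, merely invoking the second fundamental form of the small hypersphere, and your Gauss--Weingarten bookkeeping with $B(X,Y)=-\mu\langle X,Y\rangle\eta$, $\onabla_X\eta=\mu X$ and $\mu^2=\tfrac{1}{r^2}-\tfrac{1}{R^2}$ is the standard way to carry it out. I checked that with your sign conventions and the identities you list (in particular $\sum_i\langle \d\psi(e_i),\nabla^{\psi}_{e_i}\tau(\psi)\rangle=\operatorname{div}\theta^\sharp-|\tau(\psi)|^2$) the tangential and normal parts assemble precisely into \eqref{ecuatia1}, so the plan is correct and complete in its essentials.
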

We have the following consequences
\begin{corollary}\label{coro1}
Assume that $\varphi$ is biharmonic.
\begin{enumerate}
  \item If the map $\psi$ is harmonic with constant energy density, then $r = R/\sqrt{2}$.
  \item If $M$ is compact, then $r\geq R/\sqrt{2}$. Moreover, if $r = R/\sqrt{2}$, then $\psi$ is harmonic and has constant energy density (see \cite{LO07}).
  \item If $e(\varphi)$ is constant and $\textnormal{div}\theta^\sharp = 0$, then $r \geq R/\sqrt{2}$. Moreover, if $r=R/\sqrt{2}$, then $\psi$ is harmonic.
\end{enumerate}
\end{corollary}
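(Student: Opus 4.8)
The plan is to exploit the orthogonal splitting of the bitension field formula \eqref{ecuatia1} into the component tangent to the small hypersphere $\s^{n-1}(r)$ and the component along the unit normal $\eta$. The first three terms on the right-hand side are images of sections under $\d\i$ (hence tangent to $\s^{n-1}(r)$ inside $\s^n(R)$), while the last term is a scalar multiple of $\eta$; thus $\tau_2(\varphi) = 0$ is equivalent to the simultaneous vanishing of the tangential part and of the scalar factor multiplying $\eta$. Since $0 < r < R$, the constant $\sqrt{R^2 - r^2}/(Rr)$ is nonzero, so the normal equation reduces to
$$
2\Delta\left(e(\psi)\right) - 2\di\theta^\sharp + \left|\tau(\psi)\right|^2 - 4\left(\frac{2}{R^2} - \frac{1}{r^2}\right)\left(e(\psi)\right)^2 = 0.
$$
This scalar identity is the workhorse for all three items. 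I would also record at the outset that, because $\i$ is an isometric inclusion, $e(\varphi) = e(\psi)$, so the hypotheses phrased in terms of $e(\varphi)$ translate directly into statements about $e(\psi)$.

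For item (1), I would substitute the hypotheses $\tau(\psi) = 0$ (whence $\tau_2(\psi) = 0$, $\theta = 0$, and $|\tau(\psi)|^2 = 0$) and $e(\psi) = \cst$ (whence $\grad|\d\psi|^2 = 0$ and $\Delta(e(\psi)) = 0$) directly into \eqref{ecuatia1}. The tangential part then vanishes identically, and the normal equation collapses to $\left(2/R^2 - 1/r^2\right)\left(e(\psi)\right)^2 = 0$. Because $\psi$ is non-constant, $e(\psi)$ is a positive constant, forcing $2/R^2 = 1/r^2$, i.e. $r = R/\sqrt{2}$.

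For item (2), I would integrate the normal equation over the compact $M$. The divergence theorem kills $\int_M \Delta(e(\psi))\, v_g$ and $\int_M \di\theta^\sharp\, v_g$, leaving $\int_M |\tau(\psi)|^2\, v_g = 4\left(2/R^2 - 1/r^2\right)\int_M (e(\psi))^2\, v_g$. The left side is non-negative and $\int_M (e(\psi))^2\, v_g > 0$ since $\psi$ is non-constant, so $2/R^2 - 1/r^2 \geq 0$, that is $r \geq R/\sqrt{2}$. In the borderline case $r = R/\sqrt{2}$ the right side vanishes, forcing $\tau(\psi) \equiv 0$, so $\psi$ is harmonic; feeding this back into the normal equation (now with $\theta = 0$ and $2/R^2 - 1/r^2 = 0$) leaves $\Delta(e(\psi)) = 0$, and on compact $M$ this yields $e(\psi) = \cst$. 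Item (3) is the non-compact analogue: constancy of $e(\varphi) = e(\psi)$ makes $\Delta(e(\psi))$ and $\grad|\d\psi|^2$ vanish pointwise, and together with the assumption $\di\theta^\sharp = 0$ the normal equation becomes the pointwise identity $|\tau(\psi)|^2 = 4\left(2/R^2 - 1/r^2\right)(e(\psi))^2$; positivity of the constant $(e(\psi))^2$ again gives $r \geq R/\sqrt{2}$, with equality forcing $\tau(\psi) = 0$.

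The main obstacle is not the algebra but securing the tangential/normal decomposition rigorously and handling the equality cases carefully: in particular, in item (2) one must invoke the normal equation a second time, after harmonicity has been established, to extract constancy of the energy density, rather than hoping to read it off directly. All remaining steps are routine applications of the divergence theorem and the sign of $|\tau(\psi)|^2$.
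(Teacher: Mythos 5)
Your proposal is correct and follows exactly the route the paper intends: Corollary \eqref{coro1} is presented as a direct consequence of Theorem \eqref{TH1}, obtained by splitting \eqref{ecuatia1} into its tangential and normal components, reading off the scalar equation along $\eta$, and then either substituting the hypotheses pointwise (items (1) and (3)) or integrating over compact $M$ (item (2)). Your handling of the equality case in item (2) — first extracting $\tau(\psi)=0$ from the integrated identity, then returning to the pointwise normal equation to get $\Delta(e(\psi))=0$ and hence constancy of $e(\psi)$ on compact $M$ — is precisely the argument the citation to \cite{LO07} refers to.
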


In this paper, we consider homogeneous polynomial maps of degree $2$ between spheres, that are quadratic forms (we follow the notations and terminology in \cite{AOO23}).

Consider the diagram below
\bigskip
\begin{center}
  \begin{tikzpicture}
  \matrix (m) [matrix of math nodes,row sep=3em,column sep=4em,minimum width=2em]
  {
     \r^{m+1} & \r^{n+1} \\
     \s^m & \s^n \\};
  \path[-stealth]
    (m-2-1) edge node [left] {$\i$} (m-1-1)
    (m-1-1) edge node [above] {$F$} (m-1-2)
    (m-2-1) edge node [above] {$\Phi$} (m-1-2)
    (m-2-2) edge node [right] {$\i$} (m-1-2)
    (m-2-1) edge node [below] {$\varphi$} (m-2-2);
\end{tikzpicture}
\end{center}
where $F:\r ^{m+1}\rightarrow\r ^{n+1}$ is a quadratic form. Then, $F$ can be written in the form
$$
F(\overline{x}) = \left(X^t A_1 X, X^t A_2 X, \ldots ,X^t A_{n+1}\ X\right),
$$
where the vector $\overline x = \left(x^1, x^2, \ldots , x^{m+1}\right)$ is identified with the matrix
$$
X^t = \left[x^1 \ x^2 \ \ldots \ x^{m+1}\right]
$$
and $A_1$, $\ldots$, $A_{n+1}$ are square matrices of order $m+1$ such that if $|\overline x| = 1$, then $|F(\overline x)| = 1$. In this case, we have $|F(\overline x)|^2 = |\overline x|^4$. We note that, by a standard symmetrization process, we can always assume that $A_i$ is symmetric, $i = 1, 2, \ldots, n+1$.

We will always assume that $\varphi$ is not a constant map, therefore there exists $i_0\in\{1,2,\ldots,n+1\}$ such that $A_{i_0}$ is not the identity matrix $I_{m+1}$ multiplied by a non-zero real constant.

With $\dz$ and $\Dz$ we denote operators that act on $\r^{m+1}$. Since $F$ is a homogeneous polynomial map of degree $2$, we have at any $\overline x \in \r^{m+1}$
\begin{align}\label{ecuatia2}
\dz F(\overline x) &= \begin{bmatrix}
                        2 X^t A_1 \\
                        2 X^t A_2 \\
                        \vdots \\
                        2 X^t A_{n+1}
                       \end{bmatrix}, \nonumber\\
\left|\dz F(\overline x)\right|^2 &= 4 X^t\left(A_1^2 + A_2^2 + \dots + A_{n+1}^2\right) X = 4 X^tSX,\\
\Dz F &= -2\left(\textnormal{tr}A_1, \textnormal{tr}A_2, \ldots , \textnormal{tr}A_{n+1}\right), \nonumber
\end{align}
where we denoted $S = A_1^2 + A_2^2 + \dots + A_{n+1}^2$.

The quadratic maps have the following general property.

\begin{proposition}\label{prop1}
Let $F:\r ^{m+1}\rightarrow\r ^{n+1}$ be a quadratic form. Then,
\begin{align*}
8\textnormal{tr}S + \left|\Dz F\right|^2 = 4(m + 1)(m+3).
\end{align*}
\end{proposition}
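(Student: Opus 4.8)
The plan is to feed the defining constraint of a spherical quadratic form, namely $|F(\overline{x})|^2 = |\overline{x}|^4$ for all $\overline{x}\in\r^{m+1}$, into the flat Laplacian $\Dz$ and then read off the desired scalar identity by taking a trace. The whole proof reduces to differentiating this one relation in the right way.

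First I would record the Leibniz rule for $\Dz$ acting on the squared norm of a vector-valued map. Writing $F = (F_1,\dots,F_{n+1})$ and using the sign convention $\Dz = -\textnormal{tr}\,\nz^2$, a direct computation gives
$$
\Dz\big(|F|^2\big) = 2\langle F, \Dz F\rangle - 2\big|\dz F\big|^2 .
$$
All three ingredients are already explicit in \eqref{ecuatia2}: since $\Dz F = -2(\textnormal{tr}A_1,\dots,\textnormal{tr}A_{n+1})$ is a constant vector, we get $\langle F,\Dz F\rangle = -2\,X^t\big(\sum_i (\textnormal{tr}A_i)A_i\big)X$, while $|\dz F|^2 = 4X^tSX$. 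Hence the right-hand side equals $-4\,X^t\big(\sum_i(\textnormal{tr}A_i)A_i + 2S\big)X$.

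Next I would compute the left-hand side directly from the constraint. Since $|F|^2 = |\overline{x}|^4 = (X^tX)^2$, a short calculation yields $\Dz\big(|\overline{x}|^4\big) = -4(m+3)|\overline{x}|^2 = -4(m+3)\,X^tX$. Equating the two expressions produces an identity of quadratic forms in $X$,
$$
X^t\Big(\textstyle\sum_i(\textnormal{tr}A_i)A_i + 2S\Big)X = (m+3)\,X^tX \qquad \text{for all } X,
$$
and because every matrix involved is symmetric this forces the matrix identity $\sum_i(\textnormal{tr}A_i)A_i + 2S = (m+3)\,I_{m+1}$.

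Finally, taking the trace gives $\sum_i(\textnormal{tr}A_i)^2 + 2\,\textnormal{tr}S = (m+1)(m+3)$, and since $|\Dz F|^2 = 4\sum_i(\textnormal{tr}A_i)^2$ by \eqref{ecuatia2}, multiplying by $4$ yields $8\,\textnormal{tr}S + |\Dz F|^2 = 4(m+1)(m+3)$, which is the claim. I expect the only genuine bookkeeping to be the Leibniz rule above together with the sign in $\Dz$; no step is a real obstacle. Alternatively, one could bypass the intermediate matrix identity altogether by applying the flat bi-Laplacian $\Dz^2$ directly to $|F|^2 = |\overline{x}|^4$: since $\Dz^2$ maps a homogeneous quartic to a constant, comparing the two constants delivers the scalar identity in a single stroke.
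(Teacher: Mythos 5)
Your proof is correct. The paper itself states Proposition \eqref{prop1} without proof (it is a general property imported from the framework of \cite{AOO23}), so there is no argument in the text to compare against; but your derivation is sound: applying $\Dz=-\operatorname{tr}\nz^2$ to the defining identity $|F(\overline x)|^2=|\overline x|^4$, using the Leibniz rule together with the expressions in \eqref{ecuatia2}, and invoking symmetry to upgrade the equality of quadratic forms to the matrix identity $\sum_i(\textnormal{tr}A_i)A_i+2S=(m+3)I_{m+1}$, whose trace gives exactly the claim. Note that your intermediate matrix identity is strictly stronger than the stated scalar identity (and is consistent with the Remark following Proposition \eqref{prop3}, where harmonicity forces $\textnormal{tr}A_i=0$ and $S=\left((m+3)/2\right)I_{m+1}$), so your route arguably buys more than what is asked.
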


Further, we recall a known result (see, for example, \cite{BW03} and \cite{ER93}).

\begin{proposition}\label{prop2}
The map $\varphi$ is harmonic if and only if $\Dz F = \overline 0.$
\end{proposition}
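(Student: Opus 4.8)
The plan is to express $\tau(\varphi)$ in terms of the constant vector $\Dz F$ and then read off harmonicity. First I would compare $\varphi:\s^m\to\s^n$ with the $\r^{n+1}$-valued map $\Phi=\i\circ\varphi=F\circ\i:\s^m\to\r^{n+1}$, where $\i:\s^n\to\r^{n+1}$ is the inclusion. Using the composition formula for the tension field together with the fact that the second fundamental form of the unit sphere $\s^n$ in $\r^{n+1}$ is $B(Y,Z)=-\langle Y,Z\rangle\,\Phi$ along $\Phi$, one obtains
\begin{equation*}
\tau(\Phi)=\tau(\varphi)-|\d\varphi|^2\,\Phi,
\end{equation*}
where $\tau(\varphi)$ is tangent to $\s^n$ while $-|\d\varphi|^2\Phi$ is normal (parallel to the position vector $\Phi$).

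Next I would compute $\tau(\Phi)$ through the homogeneity of $F$. Since $\Phi$ takes values in the flat space $\r^{n+1}$, we have $\tau(\Phi)=-\Delta^{\s^m}\Phi$ componentwise, where $\Delta^{\s^m}$ is the (geometer's) Laplacian of $\s^m$. Writing the Euclidean Laplacian of $\r^{m+1}$ in polar coordinates and using that each component of $F$ is homogeneous of degree $2$ yields the standard identity $\Delta^{\s^m}\Phi=\Dz F|_{\s^m}+2(m+1)\,\Phi$. Substituting into the previous relation and taking the part tangent to $\s^n$, which annihilates every term proportional to $\Phi$, I obtain
\begin{equation*}
\tau(\varphi)=-\bigl(\Dz F|_{\s^m}\bigr)^{\top}=-\,\Dz F+\langle\Dz F,\Phi\rangle\,\Phi,
\end{equation*}
so that $\tau(\varphi)$ is exactly the negative of the tangential part of the constant vector $\Dz F$.

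Finally I would draw the equivalence. If $\Dz F=\overline 0$, the displayed formula gives $\tau(\varphi)=0$, so $\varphi$ is harmonic. Conversely, if $\varphi$ is harmonic then $(\Dz F)^{\top}$ vanishes at every point, i.e. the fixed vector $\Dz F$ is parallel to $\Phi(\overline x)$ for all $\overline x\in\s^m$; if $\Dz F\neq\overline 0$ this forces $\Phi(\s^m)\subseteq\{\pm\Dz F/|\Dz F|\}$, and since $\s^m$ is connected and $\Phi$ is continuous, $\Phi$ — hence $\varphi$ — would be constant, contradicting our standing assumption. Therefore $\Dz F=\overline 0$. I expect the only genuinely non-computational point to be this last step: everything up to the formula for $\tau(\varphi)$ is the routine reduction via the second fundamental form and the homogeneity identity, whereas passing from ``$\Dz F$ is pointwise normal'' to ``$\Dz F=\overline 0$'' is precisely where the non-constancy hypothesis on $\varphi$ is essential.
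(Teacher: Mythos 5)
Your argument is correct. The paper does not prove Proposition \eqref{prop2} at all --- it is recalled as a known result with references to the literature --- so there is no internal proof to compare against; but your derivation is exactly the standard one, and the identity you arrive at, $\tau(\varphi)=-\Dz F+\left(|\d\varphi|^2-2(m+1)\right)\Phi$, is precisely the paper's Equation \eqref{ecuatia3} combined with \eqref{ecuatia4} (since $|\dz F|^2-2(m+3)=|\d\varphi|^2-2(m+1)$). The composition formula with the second fundamental form of $\s^n\subset\r^{n+1}$, the polar-coordinate identity for degree-$2$ homogeneous polynomials, and the passage to the tangential part are all sound with the paper's sign conventions. You are also right that the delicate point is the converse, and you handle it correctly: $(\Dz F)^{\top}=0$ forces $\Phi(\overline x)=\pm\Dz F/|\Dz F|$ when $\Dz F\neq\overline 0$, hence $\varphi$ constant by connectedness, which contradicts the paper's standing assumption that $\varphi$ is non-constant (an assumption that is genuinely needed, since for $\varphi\equiv\overline c$ one has $\Dz F=-2(m+1)\overline c\neq\overline 0$). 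A minor alternative for that last step, in the spirit of the paper's proof of Proposition \eqref{prop3}, is to homogenize $\langle\Dz F,\Phi\rangle\Phi=\Dz F$ and compare degrees, but your connectedness argument is simpler and equally valid.
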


The next result relates the harmonicity of the map $\varphi$ with its energy density $e(\varphi) = |\d\varphi|^2/2$ and, for the sake of completeness, we give its proof.

\begin{proposition}\label{prop3}
The map $\varphi$ is harmonic if and only if its energy density $e(\varphi) = m + 1$.
\end{proposition}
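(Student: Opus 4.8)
The plan is to express $e(\varphi)$ directly in terms of the matrices $A_1,\dots,A_{n+1}$ and then read off harmonicity through Proposition \eqref{prop2}. Since $\i:\s^n\to\r^{n+1}$ is an isometric immersion and $\Phi=\i\circ\varphi=F\circ\i$, we have $|\d\varphi|^2=|\d\Phi|^2$, so it suffices to compute $|\d\Phi|^2$. At a point $\overline x\in\s^m$ the position vector $\overline x$ is the unit normal, so completing an orthonormal basis of $T_{\overline x}\s^m$ with $\overline x$ gives $|\d\Phi(\overline x)|^2=|\dz F(\overline x)|^2-|\dz F(\overline x)(\overline x)|^2$. By the Euler relation for the degree-two homogeneous map $F$ we have $\dz F(\overline x)(\overline x)=2F(\overline x)$, hence $|\dz F(\overline x)(\overline x)|^2=4|F(\overline x)|^2=4$ on $\s^m$. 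Using $|\dz F|^2=4\,X^{t}SX$ from \eqref{ecuatia2}, this yields
$$
e(\varphi)=2\,X^{t}SX-2\qquad\text{on }\s^m.
$$

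The decisive step is to convert this into a \emph{pointwise} relation involving the traces $\textnormal{tr}A_i$, because Proposition \eqref{prop1} only controls the average of $X^{t}SX$ over $\s^m$, not its value at each point. For this I would apply the operator $\Dz$ to the scalar constraint $|F(\overline x)|^2=|\overline x|^4$, that is, to $\sum_i(X^{t}A_iX)^2=(X^{t}X)^2$. Using that $A_i$ is symmetric, so $\partial(X^{t}A_iX)/\partial x^j=2(A_iX)_j$, a short computation gives a quadratic identity valid for all $X$, equivalent to the matrix identity
$$
2S+\sum_{i=1}^{n+1}(\textnormal{tr}A_i)\,A_i=(m+3)\,I_{m+1},
$$
whose trace reproduces exactly Proposition \eqref{prop1}. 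Substituting $2S=(m+3)I_{m+1}-\sum_i(\textnormal{tr}A_i)A_i$ into the previous display and using $X^{t}X=1$ on $\s^m$ produces the clean formula
$$
e(\varphi)=(m+1)-\sum_{i=1}^{n+1}(\textnormal{tr}A_i)\,(X^{t}A_iX)\qquad\text{on }\s^m.
$$

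From here both implications follow at once. If $\varphi$ is harmonic, then $\Dz F=\overline 0$ by Proposition \eqref{prop2}, so every $\textnormal{tr}A_i=0$ and the formula gives $e(\varphi)=m+1$. Conversely, if $e(\varphi)\equiv m+1$, then the homogeneous quadratic $\sum_i(\textnormal{tr}A_i)(X^{t}A_iX)$ vanishes on $\s^m$, hence identically on $\r^{m+1}$, so the symmetric matrix $\sum_i(\textnormal{tr}A_i)A_i$ is zero; taking its trace leaves $\sum_i(\textnormal{tr}A_i)^2=0$, which forces $\textnormal{tr}A_i=0$ for every $i$, i.e. $\Dz F=\overline 0$, and $\varphi$ is harmonic by Proposition \eqref{prop2}. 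The only genuine obstacle is the passage from the integral statement of Proposition \eqref{prop1} to a pointwise one: the second-order computation above supplies precisely the missing pointwise identity, and the final sum-of-squares trace argument is what upgrades $\sum_i(\textnormal{tr}A_i)A_i=0$ to the vanishing of each individual trace.
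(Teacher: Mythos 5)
Your proof is correct. The route differs from the paper's in one essential ingredient: the paper quotes the tension field formula $\tau(\varphi) = -\Dz F + \left(\left|\dz F\right|^2 - 2(m+3)\right)\Phi$ together with $\left|\d\varphi\right|^2=\left|\dz F\right|^2-4$, and in the converse uses $e(\varphi)=m+1$ to reduce to $\tau(\varphi)=-\Dz F$, whence tangency of the tension field forces $\left\langle \Dz F,\Phi\right\rangle=0$ on $\s^m$. You instead bypass the tension field entirely and obtain the pointwise matrix identity $2S+\sum_i(\textnormal{tr}A_i)A_i=(m+3)I_{m+1}$ by applying the Euclidean Laplacian to the constraint $\sum_i\left(X^tA_iX\right)^2=\left(X^tX\right)^2$; this identity is equivalent to the normal component of the paper's formula \eqref{ecuatia3}, holds without any harmonicity or energy hypothesis, recovers Proposition \eqref{prop1} upon taking traces, and yields the clean unconditional formula $e(\varphi)=(m+1)-\sum_i\textnormal{tr}A_i\cdot X^tA_iX$ from which both implications are immediate. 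The endgame of your converse (extend the vanishing homogeneous quadratic from $\s^m$ to $\r^{m+1}$, conclude the symmetric matrix $\sum_i\textnormal{tr}A_i\cdot A_i$ vanishes, and take its trace to get $\sum_i(\textnormal{tr}A_i)^2=0$) is verbatim the paper's argument. What your version buys is self-containedness and a slightly stronger statement (a pointwise formula for $e(\varphi)$ valid for every quadratic form); what the paper's version buys is brevity, since formula \eqref{ecuatia3} is already in hand and is the workhorse of the rest of the article.
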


\begin{proof}
First, we recall the following formulas for quadratic forms, which hold on $\s^m$:
\begin{align}\label{ecuatia3}
\tau(\varphi) &= -\Dz F + \left(\left|\dz F\right|^2 - 2(m+3)\right)\Phi
\end{align}
and
\begin{align}\label{ecuatia4}
\left|\d\varphi\right|^2 &= \left|\dz F\right|^2 - 4.
\end{align}
For the direct implication, it is easy to see that it follows directly from Proposition \eqref{prop2} and formulas \eqref{ecuatia3} and \eqref{ecuatia4}.

For the converse implication, Equations \eqref{ecuatia3} and \eqref{ecuatia4} give us
$$
\tau(\varphi) = -\Dz F,
$$
which further implies that
$$
\left\langle\Dz F, \Phi\right\rangle = 0.
$$
Thus, on $\s^m$ we have
$$
\textnormal{tr}A_1\cdot X^tA_1X + \textnormal{tr}A_2\cdot X^tA_2X + \dots + \textnormal{tr}A_{n+1}\cdot X^tA_{n+1}X = 0,
$$
which is equivalent to
$$
X^t\left(\textnormal{tr}A_1\cdot A_1 + \textnormal{tr}A_2\cdot A_2 + \dots + \textnormal{tr}A_{n+1}\cdot A_{n+1}\right)X = 0.
$$
Since the above equation is a homogeneous one, we can extend it on $\r^{m+1}$ and, because the matrix $A = \textnormal{tr}A_1\cdot A_1 + \textnormal{tr}A_2\cdot A_2 + \dots + \textnormal{tr}A_{n+1}\cdot A_{n+1}$ is a symmetric matrix, we can conclude that $A = O_{m+1}$. We notice that
$$
\textnormal{tr}A = \frac{1}{4} \left|\Dz F\right|^2 = 0.
$$
Thus, $\tau(\varphi) = 0$ and the proof is complete.
\end{proof}

\begin{remark}
The condition $e(\varphi) = m + 1$ is equivalent to $S = \left((m+3)/2\right)I_{m+1}$.
\end{remark}

Next, we recall the following result.
\begin{theorem}\label{TH2}{(see \cite{AOO23})}
Let $F:\r ^{m+1}\rightarrow\r ^{n+1}$ be a quadratic form given by
$$
F(\overline{x}) = \left(X^t A_1 X, X^tA_2X, \ldots ,X^t A_{n+1}\ X\right),
$$
such that if $|\overline x| = 1$, then $\left|F(\overline x)\right| = 1$.  We consider $\varphi:\s^m\rightarrow\s^n$ defined by $\varphi(\overline x) = F(\overline x)$ and $\Phi = \i\circ\varphi:\s^m\rightarrow\r^{n+1}$. If we denote $S = A_1^2 + A_2^2 + \dots + A_{n+1}^2$, then at a point $\overline x\in\s^m$, the bitension field of $\varphi$ has the following expression
\begin{align}\label{ecuatia5}
\tau_2(\varphi)_{\overline x}  =& -4\left(m + 5 - 4X^tSX\right)\left(\textnormal{tr}A_1, \textnormal{tr}A_2, \ldots , \textnormal{tr}A_{n+1}\right) \nonumber\\
               & + 4\left((m+3)(m+5) - 6(m + 5)X^tSX + 8 \left(X^tSX\right)^2 \right) \Phi(\overline x) \\
               & + 32 \left(X^t A_1S X, X^tA_2SX, \ldots ,X^t A_{n+1}S X\right).\nonumber
\end{align}
\end{theorem}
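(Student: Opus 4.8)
The plan is to evaluate Jiang's expression $\tau_2(\varphi) = -\Delta\tau(\varphi) - \textnormal{trace}\,R^{\s^n}(\d\varphi(\cdot),\tau(\varphi))\d\varphi(\cdot)$ by pushing every intrinsic operator down to the flat operators on $\r^{m+1}$ and $\r^{n+1}$ and exploiting that $F$ is homogeneous of degree $2$. I would first rewrite the tension field \eqref{ecuatia3} by means of \eqref{ecuatia2} as $\tau(\varphi) = C + g\,\Phi$, where $C = 2(\textnormal{tr}A_1,\ldots,\textnormal{tr}A_{n+1})$ is a \emph{constant} vector of $\r^{n+1}$ and $g = 4X^tSX - 2(m+3)$ is a scalar. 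Since $\tau(\varphi)$ is a section of $\varphi^{-1}T\s^n$, its orthogonality to $\Phi$ forces the algebraic identity $\sum_l\textnormal{tr}A_l\cdot A_l = (m+3)I_{m+1} - 2S$ (equivalently $\langle C,\Phi\rangle = -g$); this identity, together with the completeness relation $\sum_i e_ie_i^t = I_{m+1} - XX^t$ for an orthonormal frame $\{e_i\}$ of $T_{\overline x}\s^m$, is exactly what turns the matrix $\sum_l\textnormal{tr}A_l\cdot A_l$ arising in the computation into the matrix $S$ appearing in \eqref{ecuatia5}.

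The curvature term is the easier of the two. Because $\s^n$ has constant sectional curvature $1$, one has $R^{\s^n}(U,V)W = \langle V,W\rangle U - \langle U,W\rangle V$, so
$$
\textnormal{trace}\,R^{\s^n}(\d\varphi(\cdot),\tau(\varphi))\d\varphi(\cdot) = \sum_i\langle\tau(\varphi),\d\varphi(e_i)\rangle\d\varphi(e_i) - |\d\varphi|^2\tau(\varphi).
$$
Here $\langle\tau(\varphi),\d\varphi(e_i)\rangle = \langle C,\d\varphi(e_i)\rangle$, since the $\Phi$-part of $\tau(\varphi)$ is normal to $\s^n$; writing $\d\varphi(e_i) = \dz F(e_i)$ and contracting with the completeness relation and the identity above reduces the first sum to $16(X^tSX)\Phi - 16(X^tA_1SX,\ldots,X^tA_{n+1}SX)$, while the second term is handled directly from $|\d\varphi|^2 = 4X^tSX - 4$ and from $\tau(\varphi) = C + g\Phi$.

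The rough Laplacian $\Delta\tau(\varphi)$ is the main obstacle, because $\tau(\varphi)$ must be differentiated as a section of $\varphi^{-1}T\s^n$ while its natural expression splits into the ambient-constant $C$ and the \emph{normal} field $\Phi$. I would first establish the comparison formula, valid for a tangent section $V$ of $\varphi^{-1}T\s^n$ and a frame geodesic at the point,
$$
\Delta V = \overline{\Delta}V - \sum_i\langle V,\d\varphi(e_i)\rangle\d\varphi(e_i) - \Big(2\sum_i\langle\overline{\nabla}_{e_i}V,\d\varphi(e_i)\rangle + \langle V,\tau(\Phi)\rangle\Big)\Phi,
$$
where $\overline{\Delta}$ and $\overline{\nabla}$ are the componentwise Laplace--Beltrami operator and covariant derivative on $\s^m$ (the rough Laplacian and connection of the trivial bundle $\s^m\times\r^{n+1}$) and $\tau(\Phi) = \tau(\varphi) - |\d\varphi|^2\Phi$. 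Applying this to $V = C + g\Phi$ needs only the eigenvalue computation $\overline{\Delta}(X^tMX) = 2(m+1)X^tMX - 2\,\textnormal{tr}M$ for symmetric $M$ (giving $\overline{\Delta}\Phi = 2(m+1)\Phi - C$ and $\overline{\Delta}g = 8(m+1)X^tSX - 8\,\textnormal{tr}S$), the product rule for $\overline{\Delta}(g\Phi)$, and Proposition \eqref{prop1}, which ties $\textnormal{tr}S$ to $|\Dz F|^2 = 4\sum_l(\textnormal{tr}A_l)^2 = |C|^2$ and so makes the constant terms combine. The squared density $(X^tSX)^2$ enters only through the quadratic correction terms $\langle V,\tau(\Phi)\rangle = |\tau(\varphi)|^2$ and $2\sum_i\langle\overline{\nabla}_{e_i}V,\d\varphi(e_i)\rangle = 2g|\d\varphi|^2$, together with the term $|\d\varphi|^2\tau(\varphi)$ of the curvature contraction; this is the origin of the top coefficient $8(X^tSX)^2$ in \eqref{ecuatia5}.

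Finally I would assemble $\tau_2(\varphi) = -\Delta\tau(\varphi) - \textnormal{trace}\,R^{\s^n}(\d\varphi(\cdot),\tau(\varphi))\d\varphi(\cdot)$ and collect the three surviving objects — the constant vector $(\textnormal{tr}A_1,\ldots,\textnormal{tr}A_{n+1})$, the position field $\Phi$, and the vector $(X^tA_1SX,\ldots,X^tA_{n+1}SX)$ — to recover the coefficients $-4(m+5-4X^tSX)$, $4\big((m+3)(m+5)-6(m+5)X^tSX+8(X^tSX)^2\big)$ and $32$ of \eqref{ecuatia5}. The only genuine difficulty is the bookkeeping: every step is a contraction of the symmetric matrices $A_l$ and $S$ against $X$, and the proof amounts to organizing these contractions so that the constant, linear and quadratic parts in $X^tSX$ line up with the claimed formula.
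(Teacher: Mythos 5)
The paper does not actually prove Theorem \eqref{TH2}: it is imported verbatim from \cite{AOO23} (``Next, we recall the following result''), so there is no in-paper argument to compare against. Judged on its own, your outline is a correct and essentially complete derivation. I checked the key ingredients: the orthogonality $\langle C,\Phi\rangle=-g$ does force $\sum_l\textnormal{tr}A_l\cdot A_l=(m+3)I_{m+1}-2S$; the comparison formula for $\Delta V$ in terms of $\overline{\Delta}V$, the completeness relation $\sum_ie_ie_i^t=I_{m+1}-XX^t$, and the correction terms $2\sum_i\langle\overline{\nabla}_{e_i}V,\d\varphi(e_i)\rangle=2g|\d\varphi|^2$ and $\langle V,\tau(\Phi)\rangle=|\tau(\varphi)|^2$ are all right; and assembling $\tau_2(\varphi)=-\overline{\Delta}\tau(\varphi)+\left(2g|\d\varphi|^2+|\tau(\varphi)|^2\right)\Phi+|\d\varphi|^2\tau(\varphi)$ (the two copies of $\sum_i\langle\tau(\varphi),\d\varphi(e_i)\rangle\d\varphi(e_i)$ cancel between $-\Delta\tau(\varphi)$ and the curvature trace) reproduces exactly the coefficients $-4(m+5-4X^tSX)$, $4\left((m+3)(m+5)-6(m+5)X^tSX+8(X^tSX)^2\right)$ and $32$, the constant term closing up via Proposition \eqref{prop1}. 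One point worth making explicit when you write it up: the surviving $32\left(X^tA_lSX\right)_l$ term does not come from the curvature contraction you compute (that contribution cancels) but from the cross term $-2\,\d\Phi(\grad g)$ in the product rule for $\overline{\Delta}(g\Phi)$, using that the spherical gradient of $g=4X^tSX-2(m+3)$ is $8SX-8(X^tSX)X$. With that bookkeeping spelled out, the proposal is a valid proof.
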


We note that, the condition $S = \alpha I_{m+1}$, for some real constant $\alpha > 1$, is equivalent to $|\d\varphi|^2$ is constant. More precisely, $|\d\varphi|^2 = 4(\alpha - 1)$. In this case the bitension field takes the form
$$
\tau_2(\varphi)_{\overline x}  = -8\left(\alpha - \frac{m + 5}{4}\right)\Dz F + 32\left(\alpha - \frac{m+5}{4}\right)\left(\alpha - \frac{m+3}{2}\right) \Phi(\overline x).
$$

\begin{corollary}\label{coro2}
If the quadratic form $\varphi$ has constant energy density, then $\varphi$ is proper biharmonic if and only if $e(\varphi) = (m+1)/2$.
\end{corollary}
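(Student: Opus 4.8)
The plan is to read everything off from the simplified form of the bitension field displayed just before the statement, which is valid precisely when the energy density is constant. By the note preceding the corollary, assuming that $\varphi$ has constant energy density is equivalent to $S = \alpha I_{m+1}$ for some real constant $\alpha > 1$, and then $\left|\d\varphi\right|^2 = 4(\alpha - 1)$, so that $e(\varphi) = 2(\alpha - 1)$. Under this hypothesis the bitension field is
$$
\tau_2(\varphi)_{\overline x} = -8\left(\alpha - \frac{m+5}{4}\right)\Dz F + 32\left(\alpha - \frac{m+5}{4}\right)\left(\alpha - \frac{m+3}{2}\right)\Phi(\overline x).
$$
I would first translate the two relevant thresholds into conditions on $\alpha$. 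The target condition $e(\varphi) = (m+1)/2$ reads $2(\alpha-1) = (m+1)/2$, i.e. $\alpha = (m+5)/4$. On the other hand, by Proposition \eqref{prop3} the map $\varphi$ is harmonic if and only if $e(\varphi) = m+1$, i.e. $\alpha = (m+3)/2$; since $m \geq 1$, the two values $(m+5)/4$ and $(m+3)/2$ are always distinct, which is what lets us separate biharmonicity from harmonicity.

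For the backward implication, if $\alpha = (m+5)/4$ then the common factor $\left(\alpha - (m+5)/4\right)$ annihilates both summands, so $\tau_2(\varphi) = 0$ and $\varphi$ is biharmonic. To see that it is proper, note that $e(\varphi) = (m+1)/2 \neq m+1$, so $\varphi$ is not harmonic by Proposition \eqref{prop3}.

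For the forward implication, I would factor the biharmonic equation as
$$
\left(\alpha - \frac{m+5}{4}\right)\left[-8\Dz F + 32\left(\alpha - \frac{m+3}{2}\right)\Phi(\overline x)\right] = 0,
$$
holding for every $\overline x \in \s^m$. If $\alpha = (m+5)/4$ there is nothing to prove, so suppose not; then the bracketed expression must vanish identically. This is where the single real point of the argument sits: $\Dz F$ is a fixed vector of $\r^{n+1}$, whereas $\overline x \mapsto \Phi(\overline x)$ is non-constant because $\varphi$ is assumed non-constant. Hence, if the coefficient $32\left(\alpha - (m+3)/2\right)$ were nonzero, the bracket would force $\Phi$ to be a constant vector, a contradiction; and if that coefficient vanishes, i.e. $\alpha = (m+3)/2$, the bracket reduces to $\Dz F = \overline 0$, which by Proposition \eqref{prop2} makes $\varphi$ harmonic, contradicting properness. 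Either alternative is impossible, so $\alpha = (m+5)/4$ and therefore $e(\varphi) = (m+1)/2$.

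The main obstacle is minor and amounts to the bookkeeping in the last paragraph: one must keep the two exceptional values $\alpha = (m+5)/4$ and $\alpha = (m+3)/2$ apart, and invoke the non-constancy of $\Phi$ (equivalently, the non-proportionality of the constant vector $\Dz F$ and the position map $\Phi$) to exclude the degenerate possibilities.
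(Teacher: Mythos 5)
Your proof is correct and follows the approach the paper intends: the corollary is stated as an immediate consequence of the simplified bitension field formula displayed just before it, and your case analysis on the factor $\left(\alpha - (m+5)/4\right)$, together with the non-constancy of $\Phi$ and Propositions \eqref{prop2} and \eqref{prop3} to exclude the harmonic alternative $\alpha = (m+3)/2$, is exactly the intended argument.
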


The following relations will be useful in our proof. According to \cite{GT95}, by using the standard coordinates, any quadratic form $F:\r^{m+1}\to\r^{n+1}$ can be written as
$$
F(\overline x) = \sum_{i=1}^{m+1}\overline a_i\left(x^i\right)^2 + \sum_{1\leq i<j\leq m+1}\overline a_{ij}x^ix^j,
$$
where $\overline a_i\in\r^{n+1}$, for $i=1,\ldots,m+1$, and $\overline a_{ij} \in \r^{n+1}$, for $1\leq i<j\leq m+1$. To simplify the notation, we set $\overline a_{ij} = \overline a_{ji}$, in order to have defined the vectors $\overline a_{ij}$ for all $1\leq i,j\leq m+1$. The condition that $F$ takes $\s^m$ into $\s^n$ is equivalent to the following (see \cite{GT95} and \cite{W68}):
\begin{align}\label{ecuatia6}
  |\overline a_i| & = 1,\quad\textnormal{ for } 1\leq i \leq m+1,\nonumber\\
  \langle \overline a_i, \overline a_{ij}\rangle &= 0,\quad\textnormal{ for } i\neq j,\nonumber\\
  |\overline a_{ij}|^2 + 2\langle\overline a_i, \overline a_j\rangle &= 2,\quad\textnormal{ for } i\neq j,\\
  \langle \overline a_i, \overline a_{jk}\rangle + \langle \overline a_{ij}, \overline a_{ik}\rangle &= 0,\quad\textnormal{ for } i,j,k \textnormal{ distinct},\nonumber\\
  \langle \overline a_{ij}, \overline a_{kl}\rangle + \langle \overline a_{ik}, \overline a_{jl}\rangle + \langle \overline a_{il}, \overline a_{jk}\rangle &= 0,\quad \textnormal{ for } i,j,k,l \textnormal{ distinct.}\nonumber
\end{align}

\section{Energy density of biharmonic quadratic forms}

In this section we first prove that a biharmonic quadratic map has constant energy density. We note that in the proof, the computations for the coefficients of certain polynomials have been done using \textit{Mathematica}\textsuperscript{\textregistered}.

\begin{theorem}\label{TH3}
Let $\varphi:\s^m\rightarrow\s^n$ be a quadratic form. Then, $\varphi$ is proper-biharmonic if and only if its energy density $e(\varphi)$ is constant and equal to $(m+1)/2$.
\end{theorem}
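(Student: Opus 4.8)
The plan is to reduce the whole statement to the single claim that biharmonicity forces $S=\alpha I_{m+1}$ for some constant $\alpha$. Indeed, combining \eqref{ecuatia2} and \eqref{ecuatia4} gives, on $\s^m$, the formula $e(\varphi)=\tfrac12|\d\varphi|^2 = 2X^tSX-2$, so $e(\varphi)$ is constant precisely when the symmetric matrix $S$ is a scalar multiple of the identity. Once this is established the theorem follows from Corollary \eqref{coro2}: the converse implication is exactly its \emph{if} part (constant energy density $(m+1)/2$ yields proper-biharmonicity), while in the direct implication constant energy density together with $\tau_2(\varphi)=0$ forces $e(\varphi)=(m+1)/2$, equivalently $\alpha=(m+5)/4$. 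Thus the real content is: a proper-biharmonic quadratic form has $S=\alpha I_{m+1}$.

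Before treating $S$, I would record two preliminaries. First, differentiating twice and then tracing the gradient identity $\sum_k (X^tA_kX)A_kX=(X^tX)X$ — which is just the differentiated form of $|F(\overline x)|^2=|\overline x|^4$ — yields the clean relation $A+2S=(m+3)I_{m+1}$, where $A:=\sum_k \textnormal{tr}A_k\cdot A_k$. In particular $A$ and $S$ are simultaneously diagonalizable, and on the $\lambda$-eigenspace of $S$ the matrix $A$ acts as $(m+3-2\lambda)I$. Second, since $\varphi$ is proper-biharmonic it is not harmonic, so by Proposition \eqref{prop2} the vector $t:=(\textnormal{tr}A_1,\dots,\textnormal{tr}A_{n+1})=-\tfrac12\Dz F$ is nonzero; set $T:=|t|^2>0$.

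The next step is to feed eigenvectors of $S$ into the bitension field \eqref{ecuatia5}. If $e$ is a unit eigenvector with $Se=\lambda e$, then $e^tA_kSe=\lambda\,e^tA_ke$, and \eqref{ecuatia5} collapses to the vector identity $(m+5-4\lambda)\,t = 8(\lambda-\tfrac{m+5}{4})(\lambda-\tfrac{m+3}{2})\,\Phi(e)$. Taking norms and using $|\Phi(e)|=1$ shows that every eigenvalue of $S$ lies in the finite set $\{(m+5)/4,\ (m+3)/2\pm\sqrt{T}/2\}$; moreover, for the two outer eigenvalues $\lambda_\pm=(m+3)/2\pm\sqrt{T}/2$ the identity forces $\Phi(e)=\mp t/\sqrt{T}$ and hence, via the gradient identity above, $Ae=\mp\sqrt{T}\,e$, in agreement with $A+2S=(m+3)I_{m+1}$.

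The crux — and the step I expect to be the real obstacle — is to rule out the coexistence of two distinct eigenvalues, collapsing the three candidate eigenspaces to one. For this I would substitute a two-plane combination $X=\cos\theta\,e+\sin\theta\,e'$ of eigenvectors from different eigenspaces into \eqref{ecuatia5}, obtaining a scalar identity valid for all $\theta$, and decompose it along $t$ and along the cross-term directions. The polarized gradient identities then pin down the mixed quantities $e^tA_ke'$: for the pair $\lambda_+,\lambda_-$ the cross vector turns out to be nonzero yet orthogonal to $t$, and for any pairing with the eigenvalue $(m+5)/4$ the relevant cross vector is nonzero yet orthogonal both to $t$ and to the image point $\Phi(e')$. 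Feeding these orthogonality relations back in forces a nonconstant polynomial in $\theta$ to vanish identically, which is impossible; hence a single eigenvalue survives and $S=\alpha I_{m+1}$. Organizing and checking the $\theta$-polynomial coefficients in this elimination is precisely the bookkeeping best delegated to a computer algebra system. With $S=\alpha I_{m+1}$ in hand, Corollary \eqref{coro2} completes the proof.
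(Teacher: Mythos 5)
Your strategy is sound and, once the details are filled in, it yields a correct proof that is genuinely different from (and in places cleaner than) the paper's. The paper diagonalizes $S$, homogenizes $\tau_2(\varphi)=\overline 0$ into a degree-$6$ identity on $\r^{m+1}$, and extracts coefficients of individual monomials; to propagate the eigenvalue $(m+5)/4$ to all of $S$ it must pass through three-variable monomials such as $\left(x^1\right)^3x^2\left(x^j\right)^2$, because Equation \eqref{ecuatia7} only guarantees that \emph{some} $\overline a_{1j}$ is nonzero, and it then needs a separate two-case analysis when $(m+5)/4$ is not an eigenvalue. You instead evaluate \eqref{ecuatia5} at unit eigenvectors and along great circles spanned by pairs of eigenvectors, and you import the matrix identity $A+2S=(m+3)I_{m+1}$ with $A=\sum_k\textnormal{tr}A_k\cdot A_k$, which the paper uses only through its trace (Proposition \eqref{prop1}). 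That identity is exactly what makes the two-plane reduction viable: for an eigenvector $e$ with $Se=\frac{m+5}{4}e$ it gives $\langle t,\Phi(e)\rangle=e^tAe=\frac{m+1}{2}$, and together with $\Phi(e')=\mp t/\sqrt{T}$ for the outer eigenvalues and the polarized relation $2\left|B(e,e')\right|^2+\langle\Phi(e),\Phi(e')\rangle=1$ (the analogue of the third line of \eqref{ecuatia6}) it forces the cross vector $\beta=B(e,e')=\left(e^tA_1e',\ldots,e^tA_{n+1}e'\right)$ to be nonzero for \emph{every} pair of eigenvectors with distinct eigenvalues --- the point the paper cannot reach with two-variable data alone.

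That said, the decisive step in your write-up is asserted rather than proved, and three details must be supplied. First, the nonvanishing of $\beta$: for the pair $\left(\lambda_+,\lambda_-\right)$ one gets $|\beta|^2=1$; for $\left(\frac{m+5}{4},\lambda_\pm\right)$ one gets $2|\beta|^2=1\pm\frac{m+1}{2\sqrt T}$, and in the minus case this is either negative (already a contradiction) or vanishes exactly when $\sqrt T=\frac{m+1}{2}$, i.e. $\lambda_-=\frac{m+5}{4}$, which is excluded since the two eigenvalues are assumed distinct; so ``nonzero'' is not automatic but does hold. Second, $\beta\perp t$: any pair of distinct eigenvalues contains at least one outer one, so one of $\Phi(e),\Phi(e')$ is parallel to $t$, and $\beta$ is orthogonal to both $\Phi(e)$ and $\Phi(e')$ by polarization of $|F(\overline x)|^2=|\overline x|^4$. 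Third, the contradiction itself, which in fact needs no computer algebra: with $X=\cos\theta\,e+\sin\theta\,e'$ and $Q=\lambda\cos^2\theta+\lambda'\sin^2\theta$, the $\beta$-component of \eqref{ecuatia5} reduces to
$$
8\cos\theta\sin\theta\,|\beta|^2\left(8Q^2-6(m+5)Q+(m+3)(m+5)+4(\lambda+\lambda')\right)=0,
$$
and a quadratic in $Q$ with leading coefficient $8$ cannot vanish on the whole interval swept by $Q$. With these points made explicit the argument is complete; the surrounding reductions (constancy of $e(\varphi)$ is equivalent to $S$ being scalar, and Corollary \eqref{coro2} then pins down the constant) are correct as you state them.
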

\begin{proof}
For the direct implication, the idea is to use Formula \eqref{ecuatia5} and the homogenization process.

The energy density and the notions of harmonic and biharmonic maps are invariant under the action of isometries on domain and/or codomain. Therefore, we are allowed to perform linear orthogonal transformation on $\r^{m+1}$ and/or $\r^{n+1}$. The strategy of the proof is to compute $\tau_2(\varphi)$ and then to act with an orthogonal transformation on $\r^{m+1}$ in order to get a simplified form for $\tau_2(\varphi)$. Further, we transform the non-homogeneous polynomial equation $\tau_2(\varphi) = \overline 0$ on $\s^m$ into a homogeneous polynomial equation of degree $6$ on $\r^{m+1}$ valued in $\r^{n+1}$, and thus, for each component of this equation, all coefficients have to be $0$.

We suppose from the beginning that $\varphi$ is not harmonic, thus
$$
\Dz F \neq \overline 0 \quad \textnormal{and} \quad S \neq \frac{m+3}{2}I_{m+1}.
$$
The matrix $S$ defines a quadratic map. We perform an orthogonal change of the domain variables $x^1$, $x^2$, $\ldots$, $x^{m+1}$  which brings $S$ in diagonal form. Thus, we write $S$ as
$$
S  = \begin{bmatrix}
             s_1 & 0 & \dots & 0 \\
             0 & s_2 & \dots & 0 \\
             \vdots & \vdots & \ddots & \vdots \\
             0 & 0 & \dots & s_{m+1}
     \end{bmatrix}.
$$
We write the matrices $A_i$ as
$$
A_i  = \frac{1}{2}\begin{bmatrix}
             2a^i_1 & a^i_{12} & \dots & a^i_{1(m+1)} \\
             a^i_{12} & 2a^i_2 & \dots & a^i_{2(m+1)} \\
             \vdots & \vdots & \ddots & \vdots \\
             a^i_{1(m+1)} & a^i_{2(m+1)} & \dots & 2a^i_{m+1}
           \end{bmatrix},\quad \textnormal{for } 1\leq i \leq n+1.
$$
The vectors $\overline a_i$ and $\overline a_{ij}$ satisfy System \eqref{ecuatia6}. By direct computation of $S$ and using the first relation of System \eqref{ecuatia6}, we get
\begin{align}\label{ecuatia7}
s_k &= \left|\overline a_k\right|^2 + \frac{1}{4}\sum_{i=1,\ i\neq k}^{m+1}\left|\overline a_{ik}\right|^2\nonumber\\
    &= 1 + \frac{1}{4}\sum_{i=1,\ i\neq k}^{m+1}\left|\overline a_{ik}\right|^2.
\end{align}
We transform the non-homogeneous polynomial map $\tau_2(\varphi)$ from Equation \eqref{ecuatia5} into a homogeneous polynomial map of degree $6$ because it is well known that if a homogeneous polynomial vanishes on the sphere $\s^m$, then it vanishes on $\r^{m+1}$. Thus, we obtain
\begin{align}\label{ecuatia8}
& -4\left|\overline x|^4((m + 5)|\overline x|^2 - 4X^tSX\right)\left(\textnormal{tr}A_1, \textnormal{tr}A_2, \ldots , \textnormal{tr}A_{n+1}\right) \nonumber\\
& + 4\left((m+3)(m+5)|\overline x|^4 - 6(m + 5)|\overline x|^2X^tSX + 8 \left(X^tSX\right)^2 \right) F(\overline x) \\
& + 32|\overline x|^4 \left(X^t A_1S X, X^tA_2SX, \ldots ,X^t A_{n+1}S X\right) = \overline 0, \quad \textnormal{on } \r^{m+1}\nonumber
\end{align}
We analyse the coefficient list for each component of the above homogeneous polynomial equation. For any $i\in\{1,2,\ldots,n+1\}$ we notice that the coefficient of $(x^k)^6$, which has to vanish, gives
\begin{align}\label{ecuatia9}
4\left(5 + m - 4s_k\right)\left(a^i_k\left(3 + m - 2s_k\right) - \textnormal{tr}A_i\right) = 0,\quad \forall k\in\{1,2,\ldots,m+1\},
\end{align}
Thus, for any $k$ arbitrarily fixed, we have either
$$
s_k = \frac{m+5}{4},
$$
or
$$
a^i_k\left(3 + m - 2s_k\right) - \textnormal{tr}A_i = 0,\quad \forall i\in\{1,2,\ldots,n+1\}.
$$
The last relations can be rewritten as
\begin{align}\label{ecuatia10}
\overline a_k\left(3 + m - 2s_k\right) &=-\frac{1}{2}\Dz F,
\end{align}
and further, taking in consideration the first relation in Equation \eqref{ecuatia6}, we obtain
\begin{align}\label{ecuatia11}
\left|3 + m - 2s_k\right| &=\frac{1}{2}\left|\Dz F\right|.
\end{align}

Without loss of generality, we assume that $s_1 = (m+5)/4$. By looking at the $i$-th component of Equation \eqref{ecuatia8}, from the coefficients of $\left(x^1\right)^5x^2$, $\left(x^1\right)^5x^3$, $\ldots$, $\left(x^1\right)^5x^{m+1}$ we get the following system of equations, for any $i\in\{1,2,\ldots,n+1\}$,
\begin{align}\label{ecuatia12}
\begin{cases}
-4a^i_{12}\left(5 + m - 4s_2\right) = 0,\\
-4a^i_{13}\left(5 + m - 4s_3\right) = 0,\\
\dots \\
-4a^i_{1(m+1)}\left(5 + m - 4s_{m+1}\right) = 0.
\end{cases}
\end{align}
Since $s_1$ is greater than $1$, equation \eqref{ecuatia7} gives us
$$
\left|\overline a_{12}\right| + \left|\overline a_{13}\right| + \ldots + \left|\overline a_{1(m+1)}\right| > 0.
$$
Without loss of generality, we assume that $a^{i_0}_{12} \neq 0$. Thus, from Equation \eqref{ecuatia12} for $i = i_0$ we obtain $s_2 = (m + 5)/4$. By looking at the $i_0$-th component of Equation \eqref{ecuatia8}, from the coefficients of $\left(x^1\right)^3x^2\left(x^3\right)^2$, $\left(x^1\right)^3x^2\left(x^4\right)^2$, $\ldots$, $\left(x^1\right)^3x^2\left(x^{m+1}\right)^2$ we get
\begin{align*}
\begin{cases}
2a^{i_0}_{12}(5 + m)\left(5 + m - 4s_3\right) = 0,\\
2a^{i_0}_{12}(5 + m)\left(5 + m - 4s_4\right) = 0,\\
\dots \\
2a^{i_0}_{12}(5 + m)\left(5 + m - 4s_{m+1}\right) = 0.
\end{cases}
\end{align*}
Then,
$$
s_3 = s_4 = \dots = s_{m+1} = \frac{m+5}{4}.
$$
Thus $S = \left((m+5)/4\right)I_{m+1}$.

We conclude that if there is an eigenvalue of $S$ equal to $(m+5)/4$, then all of them are equal to $(m+5)/4$. Thus, $S = \left((m+5)/4\right)I_{m+1}$ and using Equations \eqref{ecuatia2} and \eqref{ecuatia4} we get that $\varphi$ has constant energy density $e(\varphi) = (m + 1)/2$.

Now, we assume  that $s_k \neq (m+5)/4$, for any $k\in\{1,2,\ldots, m+1\}$. We have that $\Dz F \neq \overline 0$, otherwise, according to Proposition \eqref{prop2} $\varphi$ would be harmonic. From Equations \eqref{ecuatia10} it follows that $3 + m - 2s_k \neq 0$ for any $k\in\{1,2,\ldots, m+1\}$ and therefore
\begin{equation}\label{ecuatia13}
\overline a_k = \frac{-1}{2\left(3 + m - 2s_k\right)}\Dz F, \quad k\in\{1,2,\ldots,m+1\}.
\end{equation}
From Equation \eqref{ecuatia13} it follows that $\overline a_k \parallel \overline a_l$, for any $k,l\in\{1,2,\ldots,m+1\}$. Since $\left|\overline a_k\right| = 1$, for any $k\in\{1,2,\ldots,m+1\}$, we distinguish two cases:

\emph{Case 1.} For any $k,l\in\{1,2,\ldots,m+1\}$, $\overline a_k = \overline a_l$. From  Equations \eqref{ecuatia10} we conclude that
$$
s_1 = s_2 = \dots = s_{m+1} = \alpha,
$$
and thus $S$ is a scalar matrix, i.e. $S = \alpha I_{m+1}$, which implies that the map $\varphi$ has constant energy density. Since $\alpha \neq (m+5)/4$, using Corollary \eqref{coro2}  we conclude the map $\varphi$ cannot be proper biharmonic.

\emph{Case 2.} There exists $k\neq l$ such that $\overline a_k \neq \overline a_l$. Without loss of generality, we assume that $\overline a_1 = -\overline a_2$. From Equation \eqref{ecuatia13} we get
\begin{align}\label{ecuatia14}
s_2 = 3 + m - s_1
\end{align}
Also, from the third relation of System \eqref{ecuatia6} we get
$$
\left|\overline a_{12}\right|^2 = 2 - 2\left\langle\overline a_1, \overline a_2\right\rangle = 4.
$$
Thus, there exists $i_0$ such that  $a^{i_0}_{12} \neq 0$. From the coefficients of $\left(x^1\right)^5x^2$  in the $i_0$-th component of Equation \eqref{ecuatia8} we get
\begin{align}\label{ecuatia15}
4a^{i_0}_{12}\left(15 + m^2 + m\left(8 - 6s_1\right) - 26s_1 + 8s^2_1 + 4s_2\right) = 0.
\end{align}
Using Equation \eqref{ecuatia14} we substitute $s_2$ in Equation \eqref{ecuatia15} and we obtain the equation
$$
8s_1^2 - 6s_1(5 + m) + m^2 + 12m +27 = 0,
$$
 which has the solutions
$$
\left\{\frac{m + 3}{2}, \frac{m + 9}{4}\right\}.
$$
Since $3 + m - 2s_k \neq 0$ for any $k\in\{1,2,\ldots, m+1\}$, it follows that $s_1 = (m+9)/4$, the coefficient of $\left(x^1\right)^3\left(x^2\right)^3$ becomes $-4a^{i_0}_{12}(-3+m)$ which vanishes if and only if $m = 3$. But, then $s_1 = 3$ which is in contradiction with $3 + m - 2s_1 \neq 0$.
Thus, \textit{Case 2} leads only to contradictions.

The converse implication follows immediately from Corollary \eqref{coro2} and the proof is complete.
\end{proof}

From Theorem \eqref{TH3}, Equation \eqref{ecuatia4} and Proposition \eqref{prop1} we deduce

\begin{corollary}
Let $\varphi:\s^m\rightarrow\s^n$ be a proper biharmonic quadratic form. Then, $\left|\Dz F\right|^2 = 2(m + 1)^2.$
\end{corollary}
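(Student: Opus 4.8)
The plan is to combine the three cited ingredients directly. The final Corollary asserts that a proper biharmonic quadratic form satisfies $\left|\dz F\right|^2 = 2(m+1)^2$, and all the work has already been done upstream. First I would invoke Theorem \eqref{TH3}, which tells us that a proper biharmonic quadratic form has constant energy density equal to $(m+1)/2$, that is $e(\varphi) = \left|\d\varphi\right|^2/2 = (m+1)/2$, so $\left|\d\varphi\right|^2 = m+1$. Then I would apply Equation \eqref{ecuatia4}, namely $\left|\d\varphi\right|^2 = \left|\dz F\right|^2 - 4$, to read off that $\left|\dz F\right|^2 = m + 5$ as a pointwise constant on $\s^m$.

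The remaining step is to pin down the relationship between $\left|\dz F\right|^2$ and $\left|\Dz F\right|^2$ using the constraint from Proposition \eqref{prop1}. I would use the Remark following Proposition \eqref{prop3} (or equivalently the discussion after Theorem \eqref{TH2}): constant energy density $e(\varphi) = (m+1)/2$ corresponds to $S$ being a scalar matrix. Indeed $\left|\dz F\right|^2 = 4 X^t S X$ on $\s^m$ by Equation \eqref{ecuatia2}, and since this equals the constant $m+5$ for all unit $X$, the symmetric matrix $S$ must be $S = \left((m+5)/4\right)I_{m+1}$. Hence $\trace S = (m+1)(m+5)/4$.

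Finally I would substitute into Proposition \eqref{prop1}, which states $8\trace S + \left|\Dz F\right|^2 = 4(m+1)(m+3)$. This yields
\begin{align*}
\left|\Dz F\right|^2 = 4(m+1)(m+3) - 8\cdot\frac{(m+1)(m+5)}{4} = 2(m+1)\bigl(2(m+3) - (m+5)\bigr) = 2(m+1)^2,
\end{align*}
as claimed.

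I do not expect any genuine obstacle here, since the statement is an immediate arithmetic consequence of results already established. The only point requiring a small amount of care is justifying that constant $\left|\dz F\right|^2$ forces $S$ to be scalar: this follows because $X^t S X$ being constant on the unit sphere means $X^t(S - cI)X = 0$ for all unit $X$ with $c = (m+5)/4$, and a symmetric matrix whose associated quadratic form vanishes on the sphere must be the zero matrix. With $\trace S$ determined, the conclusion drops out of Proposition \eqref{prop1} directly.
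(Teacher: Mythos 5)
Your proposal is correct and follows exactly the route the paper indicates: Theorem \eqref{TH3} gives $e(\varphi)=(m+1)/2$, Equation \eqref{ecuatia4} gives $\left|\dz F\right|^2=m+5$ (hence $\trace S=(m+1)(m+5)/4$), and Proposition \eqref{prop1} yields $\left|\Dz F\right|^2=2(m+1)^2$. The paper leaves these details implicit, and your filling-in (including the justification that constant $X^tSX$ on $\s^m$ forces $S$ to be scalar) is sound.
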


\begin{remark}
We note that the direct implication of Proposition \eqref{prop3} can be proved in the same manner as the direct implication of Theorem \eqref{TH3} using Equation \eqref{ecuatia3} and the homogenization process.
\end{remark}

\begin{remark}
There exists quadratic forms with constant energy density which are neither biharmonic nor harmonic. We give the following example from \cite{AOO23}. Let $F_\lambda:\r^4\times\r^4\rightarrow\r^6$ given by
$$
F_\lambda(\overline z,\overline w) = \left(|\overline z|^2 + \lambda |\overline w|^2, \sqrt{2(1-\lambda)}P(\overline z,\overline w), \sqrt{1 - \lambda^2}|\overline w|^2\right),
$$
where $\lambda \in [0,1)$ and $P:\r^4\times\r^4\rightarrow\r^4$ is the orthogonal multiplication of quaternions.
By direct computations, we can see that
$$
S = A_1^2 + A_2^2 + \dots + A_8^2 = (3 - 2\lambda) I_8.
$$
Thus, the quadratic form $\varphi_\lambda:\s^7\rightarrow\s^5$, obtained from the restriction of $F_\lambda$, has constant energy density $e\left(\varphi_\lambda\right) = 4(1-\lambda)$. If $\lambda = 0$, the map $\varphi_0$ is proper biharmonic, but if $\lambda\in (0,1)$, the map $\varphi_\lambda$ is neither biharmonic nor harmonic. Moreover, $\varphi_0\left(\s^7\right)$ lies in the small hypersphere $\s^{4}\left(1/\sqrt{2}\right)$.
\end{remark}

All known examples of proper biharmonic quadratic forms suggested the following (see \cite{AOO23}).

\vspace{0.3cm}

\textbf{\emph{Open Problem.}} If $\varphi:\s^m\rightarrow\s^n$ is a proper biharmonic quadratic form, then up to an isometry of $\s^n$, the first $n$ components of $\varphi$ are harmonic polynomials on $\r^{m+1}$ and form a map $\psi:\s^m\rightarrow\s^{n-1}(1/\sqrt{2})$.

\vspace{0.3cm}

Now, using the results presented above, we can give a positive answer to this problem.

\begin{theorem}\label{TH4}
If $\varphi:\s^m\rightarrow\s^n$ is a proper biharmonic quadratic form, then up to an isometry of $\s^n$, the first $n$ components of $\varphi$ form a harmonic map $\psi:\s^m\rightarrow\s^{n-1}(1/\sqrt{2})$.
\end{theorem}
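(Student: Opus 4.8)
The plan is to combine the rigidity $S=\frac{m+5}{4}I_{m+1}$ supplied by Theorem \eqref{TH3} (equivalent to $e(\varphi)=(m+1)/2$) with the normalization relations of System \eqref{ecuatia6}, and to reduce the whole statement to the single claim that, after a suitable isometry of $\s^n$, one has $A_{n+1}=\frac{1}{\sqrt2}I_{m+1}$. First I would translate biharmonicity into the language of the vectors $\overline a_k,\overline a_{kl}$. Since $\varphi$ is proper biharmonic, Theorem \eqref{TH3} gives $S=\frac{m+5}{4}I_{m+1}$, and the corollary following it gives $\left|\Dz F\right|^2=2(m+1)^2$. Because $\Dz F=-2\left(\textnormal{tr}A_1,\dots,\textnormal{tr}A_{n+1}\right)=-2\sum_{k}\overline a_k$, the vector $\sum_k\overline a_k$ is nonzero and has norm $\frac{m+1}{\sqrt2}$. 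I would then apply an orthogonal transformation of $\r^{n+1}$, that is an isometry of $\s^n$ under which both biharmonicity and System \eqref{ecuatia6} are preserved, so as to arrange $\sum_{k}\overline a_k=\frac{m+1}{\sqrt2}\,e_{n+1}$. This forces $\textnormal{tr}A_i=0$ for $1\le i\le n$, and reduces the theorem to showing $(\overline a_k)_{n+1}=\frac{1}{\sqrt2}$ for every $k$ and $(\overline a_{kl})_{n+1}=0$ for every $k\ne l$, i.e. $A_{n+1}=\frac{1}{\sqrt2}I_{m+1}$.

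For the off-diagonal entries I would expand $S_{kl}=\sum_i(A_i^2)_{kl}$ for $k\ne l$ in terms of the columns of the $A_i$, which are $\overline a_p$ on the diagonal and $\tfrac12\overline a_{kp}$ off it. The terms with $p=k$ and $p=l$ drop out by the second relation of System \eqref{ecuatia6}, while the fourth relation rewrites $\langle\overline a_{kp},\overline a_{pl}\rangle=-\langle\overline a_p,\overline a_{kl}\rangle$, yielding $S_{kl}=-\tfrac14\langle\sum_{p\ne k,l}\overline a_p,\overline a_{kl}\rangle$. Substituting $\sum_{p\ne k,l}\overline a_p=\frac{m+1}{\sqrt2}e_{n+1}-\overline a_k-\overline a_l$ and using the second relation once more collapses this to $S_{kl}=-\tfrac14\cdot\frac{m+1}{\sqrt2}(\overline a_{kl})_{n+1}$; since $S_{kl}=0$, I conclude $(\overline a_{kl})_{n+1}=0$, so $A_{n+1}$ is diagonal.

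For the diagonal entries I would use $S_{kk}=s_k=\frac{m+5}{4}$, i.e. Equation \eqref{ecuatia7}, which gives $\sum_{l\ne k}\left|\overline a_{kl}\right|^2=m+1$. Summing the third relation of System \eqref{ecuatia6} over $l\ne k$ then yields $\sum_{l\ne k}\langle\overline a_k,\overline a_l\rangle=\frac{m-1}{2}$, hence $\langle\overline a_k,\sum_l\overline a_l\rangle=\frac{m+1}{2}$. Comparing with $\langle\overline a_k,\sum_l\overline a_l\rangle=\frac{m+1}{\sqrt2}(\overline a_k)_{n+1}$ forces $(\overline a_k)_{n+1}=\frac{1}{\sqrt2}$ for all $k$. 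Together with the previous step this gives $A_{n+1}=\frac{1}{\sqrt2}I_{m+1}$, so $F_{n+1}(\overline x)=\frac{1}{\sqrt2}\left|\overline x\right|^2$ equals $\frac{1}{\sqrt2}$ on $\s^m$; consequently $\sum_{i=1}^{n}F_i^2=\frac12$ on $\s^m$, the first $n$ components define $\psi:\s^m\to\s^{n-1}(1/\sqrt2)$, and applying Proposition \eqref{prop2} to the rescaled unit-sphere-valued map $\sqrt2\,\psi$ (a homothety of the target preserves harmonicity) shows $\psi$ is harmonic.

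The main obstacle is that biharmonicity by itself only delivers the scalar identity $S=\frac{m+5}{4}I_{m+1}$, which pins down averaged quantities but not the individual components $(\overline a_k)_{n+1}$ and $(\overline a_{kl})_{n+1}$ that the statement concerns. The real work is the bookkeeping that extracts this componentwise information: combining the vanishing of the off-diagonal entries of $S$, the normalizations in System \eqref{ecuatia6} (especially the fourth relation, used to convert the products $\langle\overline a_{kp},\overline a_{pl}\rangle$ into inner products against the aligned vector $\sum_p\overline a_p$), and the alignment $\sum_k\overline a_k=\frac{m+1}{\sqrt2}e_{n+1}$ obtained from the norm of $\Dz F$.
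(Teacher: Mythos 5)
Your proposal is correct, and the computations check out: the off-diagonal identity $S_{kl}=-\tfrac14\bigl\langle\sum_{p\ne k,l}\overline a_p,\overline a_{kl}\bigr\rangle$ follows from the second and fourth relations of System \eqref{ecuatia6}, the norm $\bigl|\sum_k\overline a_k\bigr|=(m+1)/\sqrt2$ follows from Proposition \eqref{prop1} with $\textnormal{tr}S=(m+1)(m+5)/4$, and the diagonal step correctly combines \eqref{ecuatia7} with the summed third relation. However, your route differs from the paper's at the key step. The paper does not touch the entries of the matrices at all: it observes from $\tau(\varphi)=-\Dz F+\left(\left|\dz F\right|^2-2(m+3)\right)\Phi$ that tangency of the tension field forces $\left\langle\Dz F,\Phi\right\rangle=\alpha$ with $\alpha$ a nonzero constant (constancy of $e(\varphi)$ being the input from Theorem \eqref{TH3}), so the image lies in a small hypersphere $\s^{n-1}(r)$; after the same rotation aligning $\Dz F$ with the last axis, the first $n$ components are traceless, hence harmonic, and the radius is then read off from the eigenmap identity $e(\psi)=r^2(m+1)$ compared against $e(\varphi)=(m+1)/2$. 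Your argument instead extracts the full strength of $S=\frac{m+5}{4}I_{m+1}$ and the Wood--Toth relations to pin down $A_{n+1}=\frac{1}{\sqrt2}I_{m+1}$ entry by entry. What you gain is an explicit matrix normal form and independence from the formula \eqref{ecuatia3} for the tension field; what the paper's argument gains is brevity and a cleaner geometric mechanism (tangency of $\tau(\varphi)$ plus constant energy density immediately localizes the image in a hypersphere, without needing $S$ to be scalar until the radius computation). Both proofs hinge on the same rotation of $\r^{n+1}$ and on $\Dz F\neq\overline 0$.
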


\begin{proof}
First, we recall that $\varphi$ is proper biharmonic if and only if its energy density $e(\varphi)$ is constant and equal to $(m+1)/2$. We recall the following formula,
$$
\tau(\varphi) = -\Dz F + \left(\left|\dz F\right|^2 - 2(m+3)\right)\Phi.
$$
Denoting $\overline a = \Dz F$ and $\alpha = \left|\dz F\right|^2 - 2(m+3)$, from the above equation we obtain
\begin{equation*}
\left\langle\overline a, \Phi\right\rangle = \alpha.
\end{equation*}
We note that $\alpha\neq0$, otherwise $\varphi$ would be harmonic.
Thus, the image $\varphi(\s^m)$ lies in a small hypersphere of $\s^n$ of radius $r$, where
$$
r^2 = 1 - \frac{\alpha^2}{\left|\overline a\right|^2}.
$$

We perform a rotation of $\r^{n+1}$ on the components of $F$ such that
$$
\Dz F = -2\left(\textnormal{tr}A_1, \ldots, \textnormal{tr}A_n, \textnormal{tr}A_{n+1}\right) = (0,\ldots,0, a), \quad a\neq 0,
$$
and the last component of $\Phi$ is now equal to $\sqrt{1-r^2}$. The first $n$ components of $\Phi$ are harmonic polynomials on $\r^{m+1}$ and form a harmonic map $\psi:\s^m\rightarrow\s^{n-1}(r)$.

Since $\psi:\s^m\to\s^{n-1}(r)$ is harmonic, it follows that $e(\psi) = r^2(m+1)$. On the other hand, $e(\varphi) = e(\psi) = (m+1)/2$ and therefore $r=1/\sqrt{2}$.

Or, we can conclude using Corollary \eqref{coro1}.
\end{proof}

In the case of quadratic forms, Corollary \eqref{coro1} has a more rigid form.

\begin{proposition}
Let $\varphi:\s^m\to\s^n$ be a proper biharmonic quadratic map. Assume that $\varphi$ lies in a small hypersphere $\s^{n-1}(r)$ of $\s^n$. Then, $r\geq 1/\sqrt{2}$. Moreover,
\begin{enumerate}
  \item if $r = 1/\sqrt{2}$, then the map $\psi: \s^m\to\s^{n-1}\left(1/\sqrt{2}\right)$ determined by $\varphi$ is harmonic;
  \item if $r>1/\sqrt{2}$, then $\varphi$ lies in $\s^{n-2}\left(1/\sqrt{2}\right)\subset\s^{n-1}(r)$ and $\psi:\s^m\to\s^{n-2}\left(1/\sqrt{2}\right)$ is harmonic.
\end{enumerate}
\end{proposition}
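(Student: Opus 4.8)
The plan is to convert ``$\varphi$ lies in a hypersphere'' into a statement about the symmetric matrices $A_1,\dots,A_{n+1}$ and then to run an elementary extremal argument over a one-parameter family of hyperspheres. The first observation is that $\varphi$ lies in the hypersphere cut out by a unit normal $\overline w\in\r^{n+1}$ with offset $\lambda$ exactly when $\langle\overline w,\Phi\rangle\equiv\lambda$ on $\s^m$; writing $\langle\overline w,\Phi\rangle=X^t\bigl(\sum_j w_jA_j\bigr)X$ and using that a symmetric quadratic form is constant on the unit sphere iff its matrix is scalar, this is equivalent to $\sum_j w_jA_j=\lambda I_{m+1}$, in which case the radius is $\sqrt{1-\lambda^2}$. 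I write $\ell(\overline w)=\lambda$ for the resulting assignment, which is linear in $\overline w$ on the set of admissible normals. Since $M=\s^m$ is compact and $\varphi$ is biharmonic, Corollary \eqref{coro1} immediately yields $r\geq1/\sqrt2$ together with claim (1): if $r=1/\sqrt2$ the induced map $\psi$ is harmonic. Thus only (2) requires genuine work.

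For (2), let $\overline v$ be the unit normal of the given hypersphere $\s^{n-1}(r)$, so $\ell(\overline v)^2=1-r^2$, and let $\overline v_0=\Dz F/|\Dz F|$ be the normal of the canonical hypersphere produced in Theorem \eqref{TH4}, for which $\ell(\overline v_0)^2=1/2$. Because $r>1/\sqrt2$ we have $\overline v\neq\pm\overline v_0$, so I may take $\overline u$ to be the unit vector in $\Span(\overline v_0,\overline v)$ orthogonal to $\overline v_0$. Every $\overline w=\cos\theta\,\overline v_0+\sin\theta\,\overline u$ is again a unit normal along which $\varphi$ lies in a hypersphere, so Corollary \eqref{coro1} forces $\ell(\overline w)^2\leq1/2$ for all $\theta$. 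But $\ell(\overline w)=\cos\theta\,\ell(\overline v_0)+\sin\theta\,\ell(\overline u)$ has amplitude $\sqrt{\tfrac12+\ell(\overline u)^2}$, which exceeds $1/\sqrt2$ unless $\ell(\overline u)=0$. Hence $\ell(\overline u)=0$, that is $\langle\overline u,\Phi\rangle\equiv0$.

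Consequently $\varphi(\s^m)$ satisfies the two orthogonal constraints $\langle\overline v_0,\Phi\rangle\equiv\ell(\overline v_0)$ and $\langle\overline u,\Phi\rangle\equiv0$, so it lies in $\s^n\cap\{\langle\overline v_0,\cdot\rangle=\ell(\overline v_0)\}\cap\{\langle\overline u,\cdot\rangle=0\}$, a codimension-$2$ subsphere of radius $\sqrt{1-\ell(\overline v_0)^2-\ell(\overline u)^2}=\sqrt{1-1/2}=1/\sqrt2$; since $\overline v\in\Span(\overline v_0,\overline u)$, this $\s^{n-2}(1/\sqrt2)$ sits inside $\s^{n-1}(r)$. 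To obtain harmonicity of $\psi$ I would rotate $\r^{n+1}$ so that $\overline v_0=e_{n+1}$ and $\overline u=e_n$; then $\Phi_{n+1}$ and $\Phi_n$ are constants ($\Phi_n\equiv0$), while $\Dz F=|\Dz F|\,e_{n+1}$ shows $\Dz F_j=0$ for $j\leq n-1$. Hence the first $n-1$ components of $F$ are harmonic polynomials forming a harmonic map $\psi:\s^m\to\s^{n-2}(1/\sqrt2)$, exactly as in Proposition \eqref{prop2} and Theorem \eqref{TH4}.

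The main obstacle is precisely the step $\ell(\overline u)=0$: Corollary \eqref{coro1} only delivers the inequality $\ell(\overline w)^2\leq1/2$ for each individual hypersphere, and the sharpening to an exact great-hypersphere constraint ($\lambda=0$) emerges only from testing the entire pencil $\cos\theta\,\overline v_0+\sin\theta\,\overline u$ and exploiting that $\overline v_0$ already saturates the bound. Setting up the correspondence ``hypersphere $\leftrightarrow$ scalar matrix $\sum_j w_jA_j=\lambda I_{m+1}$'' and confirming the linearity of $\ell$ on the span is what makes this amplitude argument available; everything else reduces to the two orthogonal constraints and the fact that $\Dz F$ points along a single normal direction.
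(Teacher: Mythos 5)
Your proof is correct, but for the substantive part (2) it takes a genuinely different route from the paper. Both arguments share the same skeleton: the inequality $r\geq 1/\sqrt{2}$ and claim (1) come from Corollary \eqref{coro1}, and the ``canonical'' hypersphere $\s^{n-1}\left(1/\sqrt{2}\right)$ with normal $\Dz F/\left|\Dz F\right|$ comes from Theorem \eqref{TH4}. Where you diverge is in showing that the extra constraint imposed by $\s^{n-1}(r)$ with $r>1/\sqrt{2}$ is a great-hypersphere constraint. The paper rotates $\r^{n+1}$ so that the given hypersphere is $\left\{y^{n+1}=\sqrt{1-r^2}\right\}$, hence $A_{n+1}=\sqrt{1-r^2}\,I_{m+1}$, and then checks by a one-line trace computation that the center $t_0\Dz F$ of the canonical $\s^{n-1}\left(1/\sqrt{2}\right)$ already lies in that hyperplane, so the intersection is totally geodesic in $\s^{n-1}\left(1/\sqrt{2}\right)$. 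You instead encode ``$\varphi$ lies in the hypersphere with unit normal $\overline w$ and offset $\lambda$'' as $\sum_j w_jA_j=\lambda I_{m+1}$, observe that the admissible normals form a linear space on which $\ell$ is linear, and run an extremal argument over the pencil $\cos\theta\,\overline v_0+\sin\theta\,\overline u$, using that each member must again satisfy the $r\geq 1/\sqrt{2}$ bound of Corollary \eqref{coro1} while $\overline v_0$ already saturates it; this forces $\ell(\overline u)=0$. The two conclusions are literally equivalent (writing $\overline v=\cos\theta_0\,\overline v_0+\sin\theta_0\,\overline u$, the paper's condition ``$A\in(\Pi)$'' reduces to $\sin\theta_0\,\ell(\overline u)=0$), so you have replaced the paper's explicit coordinate computation by a variational principle. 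Your version is slightly longer but explains \emph{why} the constraint must be a great-sphere one and is coordinate-free until the final harmonicity step; the paper's is shorter because the scalar form of $A_{n+1}$ hands it the trace for free. Two minor points worth making explicit in a final write-up: $|\ell(\overline w)|<1$ for every admissible unit normal (Cauchy--Schwarz plus non-constancy of $\varphi$), so every member of the pencil with $\ell(\overline w)\neq 0$ really is a small hypersphere to which Corollary \eqref{coro1} applies; and the harmonicity of $\psi:\s^m\to\s^{n-2}\left(1/\sqrt{2}\right)$ uses Proposition \eqref{prop2} after the homothety $\sqrt{2}\,\psi$, exactly as in the proof of Theorem \eqref{TH4}.
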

\begin{proof}
The case $r=1/\sqrt{2}$ was proved in Corollary \eqref{coro1}.

We assume that $r>1/\sqrt{2}$ and $\s^{n-1}(r)\equiv\s^{n-1}(r)\times\left\{\sqrt{1-r^2}\right\}$. In this case, the last component of $\varphi$ is given by the matrix $A_{n+1} = \sqrt{1-r^2}I_{m+1}$ and the image of $\varphi$ is included in the hyperplanes
$$
\left(\Pi\right): y^{n+1} = \sqrt{1-r^2},
$$
and
$$
\left(\Pi_1\right): \left\langle\Dz F,\overline y\right\rangle = -m-1.
$$
Therefore, the image of $\varphi$ lies also in a hypersphere $\s^{n-1}\left(1/\sqrt{2}\right)$ of $\left(\Pi_1\right)$, centered in $A$. In order to find the coordinates of the point $A$, we consider the line passing through the origin and perpendicular to $\left(\Pi_1\right)$, that is
$$
(d): \overline y = t\Dz F.
$$
Therefore, the intersection point $A$ between $(d)$ and $\left(\Pi_1\right)$ is given by
$$
t_0 = -\frac{1}{2(m+1)},
$$
and its position vector is
$$
t_0\Dz F = \left(\frac{\textnormal{tr}A_1}{m+1}, \ldots, \frac{\textnormal{tr}A_n}{m+1}, \sqrt{1-r^2}\right).
$$
Since the last component of $A$ is $\sqrt{1-r^2}$, it follows that $A\in\left(\Pi\right)$. Therefore, $\left(\Pi\right)\cap\left(\Pi_1\right)$ passes through the center of the hypersphere $\s^{n-1}\left(1/\sqrt{2}\right)$ and $\varphi$ lies in a totally geodesic hypersphere of $\s^{n-1}\left(1/\sqrt{2}\right)$, and $\psi:\s^m\to\s^{n-2}\left(1/\sqrt{2}\right)$ is a harmonic map. Clearly, $\s^{n-2}\left(1/\sqrt{2}\right)$ is a small hypersphere of $\s^{n-1}(r)$, and $\varphi$ is not full.
\end{proof}

Next, we present two applications of Theorem \eqref{TH3}, obtaining classification results.

Using the result of Calabi concerning the uniqueness of compact minimal $2$-dimensional round spheres in $\s^n$, i.e. the uniqueness of the Boruvka spheres (see \cite{C67} and also \cite{B85}, \cite{K83}),  we obtain

\begin{theorem}\label{TH5}
Let $\varphi:\s^2\to\s^n$ be a full quadratic map. Assume that $\varphi$ is homothetic. Then, $\varphi$ is proper biharmonic if and only if $n=5$, $\varphi\left(\s^2\right)\subset\s^4\left(1/\sqrt{2}\right)$, and up to homothetic changes of domain and codomain, $\psi:\s^2\to\s^4\left(1/\sqrt{2}\right)$ is the Veronese map.
\end{theorem}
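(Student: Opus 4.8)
The plan is to reduce the statement to Calabi's rigidity for minimal two-spheres via Theorem \eqref{TH4}. First I would invoke Theorem \eqref{TH3} to record that $\varphi$ being proper biharmonic forces the energy density $e(\varphi)=(m+1)/2=3/2$ to be constant, and then apply Theorem \eqref{TH4} to write, after an isometry of $\s^n$, $\Phi=\i\circ\varphi=(\psi,1/\sqrt2)$, where $\psi:\s^2\to\s^{n-1}(1/\sqrt2)$ is harmonic and its components are degree-two homogeneous harmonic polynomials on $\r^3$.

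Next I would transfer the homothety and fullness hypotheses from $\varphi$ to $\psi$. Since the last component of $\Phi$ is the constant $1/\sqrt2$, we have $\d\varphi=(\d\psi,0)$, so the homothety hypothesis passes verbatim to $\psi$, namely $\langle\d\psi(X),\d\psi(Y)\rangle=\lambda\langle X,Y\rangle$ for a positive constant $\lambda$. A harmonic map from a surface that is conformal is a minimal branched immersion, and homothety ($\lambda>0$) rules out branch points, so $\psi$ is a minimal immersion into $\s^{n-1}(1/\sqrt2)$. For fullness, the components $\psi^1,\dots,\psi^n$ are restrictions of degree-two harmonic polynomials, hence second spherical harmonics, which are $L^2$-orthogonal to the constants; therefore no nontrivial linear combination of them is constant, and $\varphi$ is full in $\s^n$ if and only if $\psi^1,\dots,\psi^n$ are linearly independent, that is, if and only if $\psi$ is full in $\s^{n-1}(1/\sqrt2)$.

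After a homothetic rescaling of the domain metric (so that $\psi$ becomes isometric) and of the codomain (from radius $1/\sqrt2$ to $1$), $\psi$ becomes a full isometric minimal immersion of a round $\s^2$ into the unit sphere $\s^{n-1}$ whose coordinate functions are second spherical harmonics, which fixes the degree datum to $k=2$ in Calabi's classification. By Calabi's uniqueness theorem for minimal two-spheres, the Boruvka spheres \cite{C67}, such a full immersion is, up to an isometry of the target, the standard degree-two minimal immersion, i.e. the Veronese map, whose full image sits in $\s^4$. Fullness then forces $n-1=4$, that is $n=5$, and identifies $\psi$, up to the homothetic changes, with the Veronese map $\s^2\to\s^4(1/\sqrt2)$.

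For the converse I would simply note that the Veronese map is harmonic with constant energy density, so placing it as the first five components of $\varphi$ with constant last component $1/\sqrt2$ gives $e(\varphi)=e(\psi)=r^2(m+1)=(1/2)\cdot3=3/2$, whence Theorem \eqref{TH3} yields that $\varphi$ is proper biharmonic. The main obstacle I anticipate is not any single computation but the careful bookkeeping needed to apply Calabi's theorem: one must verify that the homothetic harmonic $\psi$ really is, after rescaling, an \emph{isometric} minimal immersion so that the rigidity applies, and that the degree-two nature of the polynomial components pins the Boruvka datum to $k=2$. Matching the eigenvalue to the polynomial degree is precisely the step where the hypotheses \emph{quadratic}, \emph{harmonic}, and \emph{homothetic} must all be used at once.
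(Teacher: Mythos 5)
Your proposal is correct and follows exactly the route the paper intends: the paper states Theorem \eqref{TH5} as a direct consequence of Theorems \eqref{TH3} and \eqref{TH4} combined with Calabi's uniqueness of the Boruvka spheres, and your argument fills in precisely those steps (reduction to a harmonic $\psi$ into $\s^{n-1}(1/\sqrt{2})$, transfer of homothety and fullness, rescaling to an isometric minimal immersion, and the degree-two identification forcing the Veronese map and $n=5$). The bookkeeping you flag as the main obstacle is handled correctly, so nothing further is needed.
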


In \cite{G87}, G. Toth obtained the classification of all full harmonic maps from $\s^3$ to $\s^n$.

\begin{theorem}
Full quadratic harmonic maps of $\s^3$ into $\s^n$ exist only if $2\leq n \leq 8$ and $n\neq 3$. Moreover, if $\varphi:\s^3\to\s^n$ is such a map, then there exist $U\in O(4)$, $V\in O(n+1)$ and a symmetric positive definite matrix $B\in\s^2\left(\r^{n+1}\right)$ such that
$$
V\circ\varphi\circ U = B\circ\varphi_n,
$$
where $\varphi_n:\s^3\to\s^n$ is defined by
\begin{equation*}
\varphi_n\left(\overline x\right) =
\begin{cases}
\left(\left(x^1\right)^2 + \left(x^2\right)^2 - \left(x^3\right)^2 - \left(x^4\right)^2, 2\left(x^1x^3 - x^2x^4\right), 2\left(x^1x^4 + x^2x^3\right)\right), & n = 2\\
\left(\left(x^1\right)^2 + \left(x^2\right)^2 - \left(x^3\right)^2 - \left(x^4\right)^2, 2x^1x^3, 2x^1x^4, 2x^2x^3, 2x^2x^4\right), & n = 4\\
\left(\left(x^1\right)^2 - \left(x^2\right)^2, \left(x^3\right)^2 - \left(x^4\right)^2, 2x^1x^2, \sqrt{2}\left(x^1x^3 + x^2x^4\right),\right. & \\
\left. \quad \quad \sqrt{2}\left(x^2x^3 - x^1x^4\right), 2x^3x^4\right), & n = 5\\
\left(\frac{1}{\sqrt{2}}\left(\left(x^1\right)^2 + \left(x^2\right)^2 - \left(x^3\right)^2 - \left(x^4\right)^2\right), \frac{1}{\sqrt{2}}\left(\left(x^1\right)^2 - \left(x^2\right)^2\right), \right. & \\
\left. \quad \quad  \frac{1}{\sqrt{2}}\left(\left(x^3\right)^2 - \left(x^4\right)^2\right), \sqrt{2}x^1x^2, \sqrt{3}\left(x^1x^3 + x^2x^4\right),\right. & \\
\left. \quad \quad \quad \quad \sqrt{3}\left(x^2x^3 - x^1x^4\right), \sqrt{2}x^3x^4\right), & n = 6\\
\left(\left(x^1\right)^2 - \left(x^2\right)^2, \left(x^3\right)^2 - \left(x^4\right)^2, 2x^1x^2, \sqrt{2}x^1x^3, \sqrt{2}x^1x^4,\right. & \\
\left. \quad \quad  \sqrt{2}x^2x^3, \sqrt{2}x^2x^4, 2x^3x^4\right), & n =7\\
\varphi_{\lambda_2}\left(x^1, x^2, x^3, x^4\right), \textnormal{(} \varphi_{\lambda_2} = \textnormal{ a standard minimal immersion)} & n= 8
\end{cases}
\end{equation*}
\end{theorem}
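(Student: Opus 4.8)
The plan is to identify full quadratic harmonic maps with full degree-$2$ \emph{eigenmaps} and then run the do Carmo--Wallach parametrization, specialized to the $SO(4)$-module of degree-$2$ spherical harmonics on $\s^3$. First I would reduce to spherical harmonics. By Proposition \eqref{prop2} the harmonicity of $\varphi$ is equivalent to $\Dz F = \overline 0$; since $F$ is homogeneous of degree $2$, this says exactly that each component of $F$ is a homogeneous harmonic polynomial of degree $2$ on $\r^4$, hence its restriction to $\s^3$ is a spherical harmonic of degree $2$, with eigenvalue $2(2+3-1)=8$. Together with $|\varphi| = 1$ on $\s^3$, this means $\varphi$ is precisely a degree-$2$ eigenmap, whose components span an $(n+1)$-dimensional subspace of the space $\mathcal{H}_2$ of degree-$2$ spherical harmonics on $\s^3$; fullness is the statement that this span is all $(n+1)$ dimensions. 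Since $\dim \mathcal{H}_2 = \binom{5}{2} - \binom{3}{0} = 9$, we immediately get $n+1 \leq 9$, that is $n \leq 8$. The remaining content — the lower bound $n\geq 2$, the sporadic exclusion of $n = 3$, and the explicit normal forms — requires the moduli description below.

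Next I would set up the do Carmo--Wallach parametrization. Fix the standard degree-$2$ eigenmap $f = \varphi_{\lambda_2}:\s^3\to\s^8\subset\mathcal{H}_2$, built from an $L^2$-orthonormal basis of $\mathcal{H}_2$ normalized so that $|f|\equiv 1$ on $\s^3$. Any degree-$2$ eigenmap $\varphi:\s^3\to\s^n$ factors as $\varphi = A\circ f$ for a linear $A:\mathcal{H}_2\to\r^{n+1}$, since the components of $\varphi$ lie in $\mathcal{H}_2$; fullness forces $A$ surjective, so $\rank A = n+1$. The sphere condition $|\varphi|\equiv 1$ reads $\langle (A^{*}A - I)\,f(\overline x), f(\overline x)\rangle = 0$ for all $\overline x\in\s^3$, i.e. the positive semidefinite operator $P := A^{*}A$ on $\mathcal{H}_2$ satisfies $P - I \perp \mathcal{W}$, where $\mathcal{W} = \Span\{\,f(\overline x)\otimes f(\overline x) : \overline x\in\s^3\,\}$ inside the symmetric endomorphisms of $\mathcal{H}_2$. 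Polar decomposition $A = V_0\,P^{1/2}$ together with the domain rotation $U\in O(4)$ and a codomain isometry $V\in O(n+1)$ then brings $\varphi$ to the asserted normal form $V\circ\varphi\circ U = B\circ\varphi_n$, where $\varphi_n$ is the canonical rank-$(n+1)$ eigenmap (a compression of $f$) and $B$ is the induced positive definite symmetric stretch on $\r^{n+1}$; the admissibility constraint $P - I\perp\mathcal W$ is exactly what cuts out the allowed $B$.

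Finally I would extract the explicit list by representation theory. Under $SO(4)\cong (SU(2)\times SU(2))/\{\pm 1\}$ the module $\mathcal{H}_2$ is irreducible of spin $(1,1)$ and dimension $9$; hence $\mathcal{W}$ and $\mathcal{W}^{\perp}$ are $SO(4)$-submodules of $\textnormal{Sym}^2(\mathcal{H}_2)$, and the $O(4)$-action on the domain lets me diagonalize the admissible operator $P$. Decomposing $\textnormal{Sym}^2(\mathcal{H}_2)$ into irreducibles and identifying the component killed by the constant-norm constraint pins down the admissible eigenvalue patterns of $P$, hence of $B$; reading off the ranks that actually occur yields exactly $n+1\in\{3,5,6,7,8,9\}$, i.e. $n\in\{2,4,5,6,7,8\}$. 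For each admissible rank, applying the canonical $P$ to $f$ and rewriting in coordinates adapted to the $SU(2)\times SU(2)$ weight decomposition produces the stated model $\varphi_n$.

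\textbf{The main obstacle} is precisely this last step: computing $\mathcal{W}^{\perp}$ explicitly — equivalently, decomposing $\textnormal{Sym}^2$ of the $9$-dimensional module and locating the constant-norm obstruction — then carrying out the $O(4)$ orbit reduction to canonical forms rank by rank, and in particular showing that \emph{no} admissible $P$ has rank $4$, which is the sporadic gap $n=3$. This representation-theoretic bookkeeping is where the analysis of \cite{G87} does the real work.
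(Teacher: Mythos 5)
The paper does not prove this statement at all: it is quoted verbatim as Toth's classification from \cite{G87}, so there is no internal argument to compare yours against. Your outline does correctly identify the method that \cite{G87} actually uses --- harmonicity of a quadratic form is equivalent to $\Dz F=\overline 0$ (Proposition \eqref{prop2}), so the components are degree-$2$ spherical harmonics, $\varphi$ is a degree-$2$ eigenmap with eigenvalue $8$, fullness plus $\dimension\mathcal{H}_2=9$ gives $n\leq 8$, and the rest is the do Carmo--Wallach parametrization of eigenmaps by positive semidefinite operators $P=A^{*}A$ with $P-\id\perp\mathcal{W}$. Up to and including the bound $n\leq 8$, your argument is complete and correct.

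The genuine gap is that everything after that point --- which is the actual content of the theorem --- is not carried out but only named. You do not compute the decomposition of $\sym^2(\mathcal{H}_2)$ into $SO(4)$-irreducibles, you do not determine $\mathcal{W}^{\perp}$, you do not show which ranks of $P$ occur (in particular you give no argument excluding rank $4$, i.e.\ $n=3$, nor establishing the lower bound $n\geq 2$), and you do not derive any of the six explicit normal forms $\varphi_n$. Your own closing paragraph concedes that this ``is where the analysis of \cite{G87} does the real work,'' which amounts to deferring the proof of the theorem to the theorem's source. There is also a loose point in the orbit reduction: a symmetric $P$ on the $9$-dimensional space $\mathcal{H}_2$ can of course be diagonalized by some element of $O(\mathcal{H}_2)$, but only the (much smaller) image of $O(4)$ acting on $\mathcal{H}_2$ is available to you, so ``diagonalize $P$ by a domain rotation'' is not automatic and is part of the case-by-case work you have skipped. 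As it stands the proposal is a correct reduction plus a citation, not a proof; to make it self-contained you would need to execute the representation-theoretic classification of admissible $P$ rank by rank.
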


Thus, we can state

\begin{theorem}\label{TH6}
Full quadratic proper biharmonic maps of $\s^3$ into $\s^n$ exist only if $3\leq n \leq 9$ and $n\neq 4$. Moreover, if $\varphi:\s^3\to\s^n$ is such a map, then there exist $U\in O(4)$, $V\in O(n+1)$ and a symmetric positive definite matrix $B\in\s^2\left(\r^{n+1}\right)$ such that
$$
V\circ\varphi\circ U =\left(\frac{1}{\sqrt{2}}B\circ\varphi_n,\frac{1}{\sqrt{2}}\right).
$$
\end{theorem}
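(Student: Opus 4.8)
The plan is to reduce the biharmonic problem to the \emph{harmonic} classification already recalled from \cite{G87}, via the rigidity theorems established above. Let $\varphi:\s^3\to\s^n$ be a full quadratic proper biharmonic map. By Theorem \eqref{TH3} its energy density is the constant $(m+1)/2=2$, and by Theorem \eqref{TH4}, after composing with a suitable isometry of $\s^n$ we may assume that the first $n$ components of $\varphi$ form a harmonic quadratic map $\psi:\s^3\to\s^{n-1}(1/\sqrt{2})$ while the last component is the constant $1/\sqrt{2}$; in other words $\varphi=(\psi,1/\sqrt{2})$ with $|\psi|^2\equiv 1/2$. Rescaling, $\widetilde\psi:=\sqrt{2}\,\psi:\s^3\to\s^{n-1}$ is a harmonic quadratic map into the \emph{unit} sphere. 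First I would record the fullness dictionary: the $n+1$ components of $\varphi$ are the harmonic (hence trace-free) quadratics $\psi^1,\dots,\psi^n$ together with the pure-trace quadratic $(1/\sqrt{2})|\overline x|^2$; since the latter is linearly independent from any family of harmonic quadratics, $\varphi$ is full in $\s^n$ if and only if $\psi^1,\dots,\psi^n$ are linearly independent, i.e. if and only if $\widetilde\psi$ is full in $\s^{n-1}$.

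Next I would simply feed $\widetilde\psi$ into the preceding classification theorem of Toth (\cite{G87}). That theorem asserts that a full quadratic harmonic map $\s^3\to\s^{k}$ exists only for $2\le k\le 8$ with $k\neq 3$; taking $k=n-1$ yields precisely the claimed range $3\le n\le 9$ with $n\neq 4$. For the normal form, Toth produces $U\in O(4)$, $V'\in O(n)$ and a symmetric positive definite $B\in\s^2(\r^{n})$ with $V'\circ\widetilde\psi\circ U=B\circ\varphi_{n-1}$, where $\varphi_{n-1}:\s^3\to\s^{n-1}$ is the corresponding model map from the list. Undoing the scaling gives $V'\circ\psi\circ U=\tfrac{1}{\sqrt 2}\,B\circ\varphi_{n-1}$. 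I would then lift $V'$ to $V:=\mathrm{diag}(V',1)\in O(n+1)$, which fixes the last coordinate and therefore commutes with the appended constant; applying it to $\varphi=(\psi,1/\sqrt 2)$ produces
\[
V\circ\varphi\circ U=\Bigl(\tfrac{1}{\sqrt 2}\,B\circ\varphi_{n-1},\ \tfrac{1}{\sqrt 2}\Bigr),
\]
which is the asserted normal form (with the target-dimension index shifted by one relative to the harmonic model, as forced by the extra constant component).

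Finally I would check the converse, to confirm that these normal forms really are proper biharmonic and thus that the range is attained. Given $\psi=\tfrac{1}{\sqrt2}\,B\circ\varphi_{n-1}$ harmonic into $\s^{n-1}(1/\sqrt2)$ by Toth, the map $\varphi=(\psi,1/\sqrt2)$ lands in $\s^n$ since $|\psi|^2+1/2=1$, is a quadratic form, and has constant energy density $e(\varphi)=e(\psi)=r^2(m+1)=2=(m+1)/2$ with $r=1/\sqrt2$; hence Corollary \eqref{coro2} makes it proper biharmonic, and fullness of $\varphi$ follows from fullness of $\varphi_{n-1}$ through the dictionary above. The main obstacle is not analytic — all the hard geometric work is absorbed into Theorem \eqref{TH4} and the cited harmonic classification — but organizational: one must set up the rescaling/fullness correspondence precisely and keep careful track of the one-step shift in target dimension (and the accompanying extension of $V$ and $B$ by a trivial last coordinate) so that Toth's normal form for $\widetilde\psi$ transfers cleanly to the stated normal form for $\varphi$.
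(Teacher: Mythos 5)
Your proposal is correct and follows exactly the route the paper intends: the paper gives no explicit proof of Theorem \eqref{TH6} beyond ``Thus, we can state,'' relying precisely on the reduction $\varphi=(\psi,1/\sqrt{2})$ from Theorems \eqref{TH3} and \eqref{TH4} followed by Toth's harmonic classification applied to $\sqrt{2}\,\psi$. Your added details --- the fullness dictionary via the harmonic/trace decomposition of quadratics, the converse via Corollary \eqref{coro2}, and the explicit bookkeeping that the model map is $\varphi_{n-1}$ with $B\in\s^2\left(\r^{n}\right)$ extended trivially --- are all sound and in fact make the index conventions in the theorem's statement more precise than the paper does.
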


\textbf{Acknowledgments.} Thanks are due to Ye-Lin Ou for usefull comments and suggestions.

\end{document}